\numberwithin{equation}{section}
\newtheorem{theorem}{Theorem}[section]
\newtheorem{lemma}[theorem]{Lemma}
\newtheorem{proposition}[theorem]{Proposition}
\theoremstyle{definition}
\newtheorem{remark}[theorem]{Remark}
\newtheorem{question}[theorem]{Question}
\newcommand{\R}{\mathbb{R}}
\newcommand{\Z}{\mathbb{Z}}
\newcommand{\eps}{\varepsilon}
\newcommand{\mmd}{\mathrm{d}}
\def\eps{\varepsilon}
\def\Xint#1{\mathchoice
{\XXint\displaystyle\textstyle{#1}}%
{\XXint\textstyle\scriptstyle{#1}}%
{\XXint\scriptstyle\scriptscriptstyle{#1}}%
{\XXint\scriptscriptstyle\scriptscriptstyle{#1}}%
\!\int}
\def\XXint#1#2#3{{\setbox0=\hbox{$#1{#2#3}{\int}$}
\vcenter{\hbox{$#2#3$}}\kern-.5\wd0}}
\def\dashint{\Xint-}
\DeclarePairedDelimiter\floor{\lfloor}{\rfloor}
\begin{document}
\title{On the endpoint behaviour of oscillatory maximal functions}

\author{Tainara Borges, Cynthia Bortolotto and Jo\~ao P. G. Ramos}

\address{Tainara Borges and Cynthia Bortolotto: Instituto Nacional de Matem\'atica Pura e Aplicada, Estrada Dona Castorina 110, 22460-320 - Rio de Janeiro, Brazil}
\email{tainaragb@gmail.com; bortolottocyn@gmail.com}

\address{Jo\~ao P.G. Ramos: Hausdorff Center for Mathematics, Universit\"at Bonn, Endenicher Allee 60, 53115 - Bonn, Germany}
\email{jpgramos@math.uni-bonn.de}

\maketitle

\begin{abstract}Inspired by a question of Lie, we study boundedness in subspaces of $L^1(\R)$ of oscillatory maximal functions. In particular, we construct functions in $L^1(\R)$ which are never 
integrable under action of our class of maximal functions. On the other hand, we prove that these maximal functions map certain classes of spaces resembling Sobolev spaces into $L^1(\R)$ continuously under mild curvature assumptions 
on the phase $\gamma$. 
\end{abstract}

\section{Introduction} Let $f \in L^1_{loc}(\R^n).$ One of the classical results in harmonic analysis states that the Hardy--Littlewood maximal function 
\[
Mf(x) := \sup_{r>0} \dashint_{B_r(x)} |f(y)| \, \mmd y
\]
is bounded from $L^p \to L^p$, for each $p \in (1,+\infty].$ At the endpoint $p=1,$ however, the situation changes: $M$ is bounded from $L^1$ to $L^{1,\infty},$ but a simple calculation shows that, whenever 
$f \not \equiv 0,$ then 
\[
Mf(x) \gtrsim_{f} \frac{1}{|x|^n}, \text{ for sufficiently large } x \in \R^n.
\]
This shows, in particular, that there is no nontrivial subspace $X \subset L^1$ such that $M$ maps $X$ into $L^1,$ let alone boundedly. 

On the other hand, a theory of special subspaces to $L^1$ arises naturally by considering the maximal subspace of functions $f \in L^1$ such that the \emph{smooth} maximal 
function 
\[
M_{\varphi}f(x) = \sup_{t>0} | f * \varphi_t(x) |
\]
belongs to $L^1,$ where we use the notation $\varphi_t(y) = t^{-n} \varphi(y/t)$ for the $L^1-$scaling of the smooth function $\varphi \in \mathcal{S}(\R^n).$ This defines the classical \emph{Hardy space} $H^1(\R^n),$ endowed 
with the natural norm 

\[
\|f\|_{H^1(\R^n;\varphi)} = \| M_{\varphi}f\|_{L^1(\R^n)}.
\]
The classical theory of these spaces states that these norms are all equivalent, in the sense that, whenever $\|f\|_{H^1(\R^n;\varphi)} < +\infty,$ then it holds that 
$\|f\|_{H^1(\R^n; \psi)} < +\infty, \, \forall \psi \in \mathcal{S}(\R^n),$ and these quantities are uniformly comparable for $f \in H^1(\R^n).$ See, e.g., \cite[Chapter~I]{Stein1993} for further details on the subject of Hardy 
spaces. 

Therefore, it is natural to ask whether maximal functions with more regular convolution kernels can map any nontrivial subspace of $L^1$ into $L^1,$ in any suitable way. In this framework, Lie asked \cite{Lie20191,Lie20192} 
whether the \emph{oscillatory maximal function} $M_{\gamma}$ given by
\[ M_\gamma f(x)= \sup_{r>0} \frac{1}{2r} \left| \int_{-r}^{r} f(x-t)e^{i\gamma(x, t)}\mmd t \right|=\sup_{r>0} \frac{1}{2r} \left| \int_{x-r}^{x+r} f(t)e^{i\gamma(x, x-t)}\mmd t \right|,\]
for suitable measurable functions $\gamma : \R \times \R \to \R$ can have any better endpoint behaviour than the classical Hardy--Littlewood one. This is aligned with the main results in \cite{Lie20191}, where the author analyzes 
several questions in harmonic analysis related to maximal functions and Hilbert transform in the presence of curvature. 

In particular, as oscillation generally induces cancellation and, therefore, regularity, it is natural to conjecture that $M_{\gamma}$ has better behaviour in $L^1$ than the usual maximal function. The main scope of this work 
is to explore to which extent this better behaviour happens.

\subsection{Negative results} We start by investigating boundedness in the main classical endpoint spaces $L^1$ and $H^1.$ Our first main result proves that, unlike the smooth maximal function $M_{\varphi},$ the oscillatory counterpart $M_{\gamma}$ is ill-behaved 
in those spaces, mapping a function in $H^1$ into a function with infinite integral. 

\begin{theorem}\label{thm:main_negative}
Let $\gamma$ be a measurable function, and define $M_{\gamma}$ as before. Then it holds that there is $f \in H^1$ such that 
\[
M_{\gamma}f \not\in L^1,
\]
whenever one of the conditions holds
\begin{enumerate}
 \item either $\gamma(x,t) = \alpha(x)\beta(t),$ where $\alpha,\beta$ are two measurable real functions,$\alpha \in L^{\infty}(\R)$, and $\beta$ a continuous function;
 \item $\gamma(x,t) = \sum_{j=-d}^{d} c_j(x) t^j,$ for real functions $c_j \in L^{\infty}(\R)$.   
\end{enumerate}
\end{theorem}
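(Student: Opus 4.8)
The plan is to build a single function $f \in H^1(\R)$ which is "economical" in the Hardy-space sense but whose $M_\gamma$ decays no faster than $1/|x|$ near infinity, so that $M_\gamma f \notin L^1$. The natural candidate is an $H^1$-atom, say $f = \chi_{[0,1]} - \chi_{[1,2]}$ (or a smooth version thereof), which has mean zero and compact support; any such choice lies in $H^1$. The key point is that $M_\gamma f(x)$ is bounded below by choosing, for each large $x$, the radius $r \approx x$ so that the interval $[x-r,x+r]$ captures the support of $f$, and then showing that the oscillatory integral $\int f(t) e^{i\gamma(x,x-t)}\,\mmd t$ cannot be too small uniformly in $x$.

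\medskip

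For part (1), with $\gamma(x,t)=\alpha(x)\beta(t)$, the phase factor becomes $e^{i\alpha(x)\beta(x-t)}$; on the support of $f$ the variable $x-t$ ranges over a fixed compact set translated far out. The obstacle is that $\alpha(x)$ could in principle be tuned (as $x\to\infty$) to make the integral vanish. To defeat this I would argue by a compactness/continuity argument: since $\beta$ is continuous and $\alpha$ is bounded, for any fixed large $x$ the integrand $t\mapsto f(t)e^{i\alpha(x)\beta(x-t)}$ is a function of the single real parameter $\lambda = \alpha(x) \in [-\|\alpha\|_\infty,\|\alpha\|_\infty]$; the quantity $\big|\int f(t) e^{i\lambda\beta(x-t)}\,\mmd t\big|$ — after the translation $x-t \mapsto s$, this is $\big|\int f(x-s)e^{i\lambda\beta(s)}\,\mmd s\big|$, which still depends on $x$. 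The cleaner route is to exploit that $f$ has small support: on $\mathrm{supp}(f)$, $\beta(x-t)$ varies over an interval $I_x$ of fixed length, and by uniform continuity of $\beta$ on compacta we do not directly control oscillation at infinity. So instead I would \emph{choose} $f$ adapted to $\beta$: pick a sequence $x_k\to\infty$ and tiny scales $\delta_k$ such that $\beta$ is nearly constant on each $[x_k,x_k+\delta_k]$ (possible since $\beta$ is continuous, hence uniformly continuous on each $[x_k,x_k+1]$, giving some $\delta_k>0$ on which $\mathrm{osc}\,\beta < \eps_k$), and let $f = \sum_k a_k (\chi_{[x_k,x_k+\delta_k/2]} - \chi_{[x_k+\delta_k/2,x_k+\delta_k]})$ with $a_k$ chosen so that $\sum_k |a_k|$ (times the atom-normalization) is finite, placing $f\in H^1$. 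Then for $x$ just past $x_k$, choosing $r\approx x$ so that $[x-r,x+r]\supset [x_k,x_k+\delta_k]$ while excluding all other $x_j$ (here the sparsity of $\{x_j\}$ is used), the phase $e^{i\alpha(x)\beta(x-t)}$ is within $\eps_k$ of a constant on the relevant interval, so $|M_\gamma f(x)| \gtrsim \frac{1}{x}\big(|a_k|\delta_k - O(\eps_k|a_k|\delta_k)\big) \gtrsim \frac{|a_k|\delta_k}{x}$ for $x$ in a set of measure $\gtrsim \delta_k$ near $x_k$; summing the resulting contributions $\sum_k \frac{|a_k|\delta_k^2}{x_k}$ and arranging the free parameters ($a_k,\delta_k,x_k$) so that this diverges while the $H^1$ norm converges gives $M_\gamma f \notin L^1$.

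\medskip

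For part (2), with $\gamma(x,t)=\sum_{j=-d}^d c_j(x)t^j$, the same strategy applies: after translating, the phase on the support of an atom at scale $\delta_k$ near $x_k$ is a polynomial (plus negative powers) in $s = x-t$ evaluated near $s\approx x_k$; by a Taylor expansion the phase $\sum_j c_j(x) (x-t)^j$ restricted to $t\in[x_k,x_k+\delta_k]$ has oscillation controlled by $\delta_k$ times $\sup_j |c_j(x)| \cdot (\text{derivative bounds near } x_k)$, hence $\lesssim C(x_k)\,\delta_k$ with $C(x_k)$ depending polynomially on $x_k$ and on $\|c_j\|_\infty$ — the only subtlety is the negative powers $t^j$, $j<0$, whose derivatives blow up near $t=0$ but are harmless since $x_k\to\infty$. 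Choosing $\delta_k$ small relative to $1/C(x_k)$ again makes $e^{i\gamma(x,x-t)}$ nearly constant on the atom, and the same summation argument as in part (1) produces the desired $f$.

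\medskip

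The main obstacle, and where care is needed, is the \emph{uniformity in $x$}: for a fixed atom, the oscillatory integral could accidentally vanish for some bad values of $x$, so one cannot hope that a single fixed atom works for \emph{every} large $x$. The resolution above is to give up on a fixed atom and instead construct $f$ as a lacunary-type sum of atoms, each tailored (via continuity of $\beta$, resp. the polynomial structure) to a short interval near $x_k$ on which the phase is essentially frozen; the remaining work is the bookkeeping that balances the convergent $H^1$-sum $\sum_k |a_k| \cdot(\text{atom normalization})$ against the divergent lower bound $\sum_k \frac{|a_k|\delta_k^2}{x_k}$ for $\int M_\gamma f$, which is a routine choice of $x_k,\delta_k,a_k$.
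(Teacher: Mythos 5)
Your high-level plan (a sparse lacunary sum of mean-zero atoms at scales $\delta_k$ chosen so the phase is essentially frozen on each atom) is the right shape, and is indeed the paper's strategy, but the execution has a fatal flaw: the lower bound you state is actually an upper bound. You choose $r \approx x$ so that $[x-r, x+r]$ covers the \emph{entire} support $[x_k, x_k + \delta_k]$ of the $k$-th atom and claim $|M_\gamma f(x)| \gtrsim \frac{1}{x}\bigl(|a_k|\delta_k - O(\eps_k|a_k|\delta_k)\bigr)$. But the atom has mean zero, so once the phase $e^{i\gamma(x,x-t)}$ is within $\eps_k$ of a constant $c$ on the atom's support,
\[
\Bigl|\int a_k\bigl(\chi_{[x_k, x_k+\delta_k/2]} - \chi_{[x_k+\delta_k/2, x_k+\delta_k]}\bigr)(t)\, e^{i\gamma(x,x-t)}\,\mmd t\Bigr| = \Bigl|\int(\cdots)\,\bigl(e^{i\gamma(x,x-t)} - c\bigr)\,\mmd t\Bigr| \le \eps_k|a_k|\delta_k,
\]
so the ``main term'' vanishes and you only get $M_\gamma f(x) \lesssim \eps_k|a_k|\delta_k/x$: freezing the phase makes the oscillatory integral \emph{small}, not large, precisely because the atom is mean zero. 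The missing idea is to choose $r$ so that the window sees only \emph{half} of the atom. In the paper this is $r = x - c_k$ (resp.\ $r = x$ in Part 2), so that the left endpoint of $[x-r,x+r]$ sits exactly at the atom's center; the captured piece is $\chi_{[c_k, c_k + \delta_k]}$, which is not mean zero, and the frozen phase then yields $M_\gamma g(x) \gtrsim \delta_k/|x-c_k|$.

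Even granting a correct lower bound, the bookkeeping you sketch cannot close. You place the lower bound only on a set of measure $\gtrsim \delta_k$, giving an $L^1$-contribution $\approx |a_k|\delta_k^2/x_k$ per atom, and ask $\sum_k |a_k|\delta_k^2/x_k$ to diverge while $\sum_k |a_k|\delta_k$ converges; since $\delta_k/x_k \le 1$ this is impossible, as the first series is termwise dominated by the second. The actual gain in the paper is that the bound $M_\gamma g(x) \gtrsim \delta_k/(x-c_k)$ holds for all $x - c_k$ in a \emph{fixed} interval $(\delta_k, \delta)$, with $\delta$ a one-time coherence length of $\beta$ near the origin. (Note the argument of $\beta$ in $\gamma(x,x-t) = \alpha(x)\beta(x-t)$ is $x - t$, which lies near $0$ when both $x$ and $t$ are near $x_k$; so continuity of $\beta$ at $0$ suffices, a single $\delta$ works for every $k$, and there is no need to adapt $\delta_k$ to the modulus of continuity of $\beta$ near $x_k$ as you propose.) Integrating $1/(x-c_k)$ over the long interval $(\delta_k, \delta)$ produces the logarithm $\delta_k\log(\delta/\delta_k)$, and with $\delta_k = 1/\bigl(k(\log k)^2\bigr)$ one has $\sum_k \delta_k < \infty$ while $\sum_k \delta_k\log(1/\delta_k) \approx \sum_k 1/(k\log k) = \infty$. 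That logarithmic extra is the whole ball game, and your accounting misses it.
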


In particular, even under the presence of curvature, there is, in general, no hope for the maximal functions $M_{\gamma}$ to map into $L^1$ even in the case when $\gamma$ oscillates. 

The idea for constructing such a counterexample consists mainly of two steps: first, we study how $M_{\gamma}$ acts on atoms $a \in H^1.$ In doing so, we must identify the set where $M_{\gamma}a$ is large. This yields already a simple proof that $M_{\gamma}$ 
cannot be bounded from $H^1$ to $L^1$ in both of the cases of Theorem \ref{thm:main_negative}. In order to construct an explicit function that is not mapped into $L^1$ under the action of $M_{\gamma},$ the strategy 
is to add translated versions of the atoms we used. This forces the action of $M_{\gamma}$ on each of them to be `disjoint', and adjust the rate of growth of the support of the atoms to a convergent sum which is transformed 
into non-convergent under the action of $M_{\gamma};$ $k (\log k)^2$ works as such a growth for our purposes. 

\subsection{Positive results} In contrast to the negative results of Theorem \ref{thm:main_negative}, we prove that there are non-trivial subspaces $X \subset L^1$ which are mapped back into $L^1.$ These spaces are defined as $C_{p,l} =\{ f  \in \mathcal{S}'(\R) \colon \|(1+|x|^{l})f\|_{1}+\|f'\|_{p} < + \infty\}.$ Just like in Lie's case, 
the existence of `non-trivial curvature' of the phase $\gamma$ plays a crucial role in this result.

\begin{theorem}\label{thm:main_positive}
Let $\gamma$ be a measurable function, and define $M_{\gamma}$ and $C_{p,l}$ as above. Then it holds that 
$$M_{\gamma}:C_{p,l}\rightarrow L^{1}(\R) \text{ is bounded},$$
that is, $$\|M_{\gamma}f\|_{1}\lesssim_{p,l,\gamma} \|(1+|x|^{l})f\|_{1}+\|f'\|_{p} \text{, for all } f \in C_{p,l},$$
whenever one of the following conditions holds: 
\begin{enumerate} 
 \item $\gamma(x,t) = \sum_{j=1}^m c_j(x) |t|^{d_j},$ for some $c_j(x) \in L^{\infty}(\R)$ for $j=1,\dots,m-1,$ $c_m,1/c_m \in L^{\infty}(\R),$ $2 \le d_1 < d_2 < \cdots < d_m,$ $d_m>2$, $1 < l < 2,$  and $1 \le p \le +\infty;$ 
 \item $\gamma(x,t) = \gamma(t) = a(x)t^2$ for some measurable function $a$ so that $a, 1/a \in L^{\infty}(\R),$ $1 < l < 2$ and $1 \le p < + \infty.$ 
\end{enumerate}
\end{theorem}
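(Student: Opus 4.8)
The plan is to dominate $M_\gamma f$ by three pieces and to use the oscillation of $e^{i\gamma}$ in only one of them. First I would record the two structural facts about the spaces that make everything work: $C_{p,l}\subset L^1(\R)\cap L^\infty(\R)$, with $\|f\|_\infty\lesssim\|(1+|x|^l)f\|_1+\|f'\|_p$ (on any unit interval $I$ one has $\sup_I|f|\le\tfrac1{|I|}\int_I|f|+\int_I|f'|\le\|f\|_1+\|f'\|_{L^p(I)}$ by the fundamental theorem of calculus and H\"older), and $f$ is absolutely continuous because $f'\in L^1_{\mathrm{loc}}$. Since $|e^{i\gamma}|\equiv1$ one has $M_\gamma f\le Mf$ pointwise, where $M$ is the Hardy--Littlewood maximal operator from the introduction, so $\int_{|x|\le x_0}M_\gamma f\le 2x_0\|Mf\|_\infty\le 2x_0\|f\|_\infty\lesssim\|f\|_{C_{p,l}}$ for any fixed $x_0$; thus it remains to estimate $\int_{|x|>x_0}M_\gamma f$ for a suitably large $x_0=x_0(\gamma)$, and by symmetry I treat $x>x_0$, splitting the supremum over $r$ into $0<r\le x/2$ and $r>x/2$.

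For the small scales $0<r\le x/2$ I would discard the oscillation entirely and bound this piece by $\sup_{0<r\le x/2}\dashint_{B_r(x)}|f|$; since $B_r(x)\subset[x/2,3x/2]$ for such $r$, decomposing the positive axis into the dyadic intervals $I_j=(2^j,2^{j+1}]$ with dilates $\widetilde I_j=[2^{j-1},2^{j+2}]$, this piece is $\le M(f\mathbf 1_{\widetilde I_j})(x)$ for $x\in I_j$. Applying $M\colon L^q\to L^q$ for a fixed $q\in(1,l+1)$ (which is where $l$ enters), H\"older on $I_j$, the interpolation bound $\|f\mathbf 1_{\widetilde I_j}\|_q\le\|f\|_\infty^{1-1/q}\|f\mathbf 1_{\widetilde I_j}\|_1^{1/q}$, and the decay $\|f\mathbf 1_{\widetilde I_j}\|_1\le\|f\mathbf 1_{\{|y|\ge 2^{j-1}\}}\|_1\lesssim 2^{-jl}\|(1+|x|^l)f\|_1$, I obtain $\int_{I_j}(\cdots)\,dx\lesssim 2^{j(q-1-l)/q}\|f\|_\infty^{1-1/q}\|(1+|x|^l)f\|_1^{1/q}$; summing the geometric series in $j$ (convergent since $q<l+1$) controls the small-scale piece by $\lesssim\|f\|_{C_{p,l}}$.

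For the large scales $r>x/2$, fix $x$ and split $f=f\mathbf 1_{\{|y|\le x/4\}}+f\mathbf 1_{\{|y|>x/4\}}$. The far part is trivial: $\tfrac1{2r}\int|f(x-t)|\mathbf 1_{\{|x-t|>x/4\}}\,dt\le\tfrac1x\int_{|y|>x/4}|f(y)|\,dy\lesssim x^{-l-1}\|(1+|x|^l)f\|_1$, integrable in $x$. The near part confines the $t$--integral to some $I\subset[3x/4,5x/4]$, where the curvature enters: in case (1), since $c_m,1/c_m\in L^\infty$ and $d_1<\cdots<d_m$, for $x\ge x_0(\gamma)$ the top term dominates all others on $I$, so $t\mapsto\partial_t\gamma(x,t)$ is monotone there with $|\partial_t\gamma(x,t)|\gtrsim x^{d_m-1}$; case (2) is exactly the borderline $d_m=2$ of this form, where $|\partial_t(a(x)t^2)|=2|a(x)|t\gtrsim x$ on $I$ and $\partial_t^2\gamma\equiv2a(x)$ has constant sign. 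I then apply the first--derivative van der Corput estimate with the (absolutely continuous) amplitude $\psi(t)=f(x-t)$, whose total variation on $I$ is $\int_I|f'(x-t)|\,dt\le(x/2)^{1/p'}\|f'\|_p$ ($p'$ the conjugate exponent of $p$) and whose supremum is $\le\|f\|_\infty$; since $\tfrac1{2r}<\tfrac1x$ the near part is $\lesssim\lambda(x)^{-1}x^{-1}\big(\|f\|_\infty+x^{1/p'}\|f'\|_p\big)$ with $\lambda(x)\gtrsim x^{d_m-1}$ (case (1)) or $\lambda(x)\gtrsim x$ (case (2)). This is $\lesssim x^{-d_m}\|f\|_\infty+x^{-(d_m-1/p')}\|f'\|_p$ in case (1), integrable on $\{x>x_0\}$ for all $1\le p\le\infty$ because $d_m>2$, and $\lesssim x^{-2}\|f\|_\infty+x^{-(1+1/p)}\|f'\|_p$ in case (2), integrable because $p<\infty$. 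Adding the three pieces yields $\|M_\gamma f\|_1\lesssim\|(1+|x|^l)f\|_1+\|f'\|_p$.

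The main obstacle is the near part at large scales: one must verify that the stated curvature hypotheses genuinely force the one--sided lower bound $|\partial_t\gamma(x,t)|\gtrsim x^{d_m-1}$ together with monotonicity of $\partial_t\gamma(x,\cdot)$ across the whole window $t\in[3x/4,5x/4]$, absorbing all lower--order terms $c_j(x)|t|^{d_j}$, $j<m$, uniformly for $x\ge x_0(\gamma)$, and that the resulting powers of $x$ then pair with the norms $\|f\|_\infty$ and $\|f'\|_p$ so as to produce integrable tails over the \emph{entire} stated ranges of $p$ and $l$ (this last bookkeeping is precisely where $d_m>2$, respectively $p<\infty$ and the constraint $1<l<2$, get used). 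Once $C_{p,l}\subset L^\infty$, the $L^q$--boundedness of $M$ for $q>1$, and the van der Corput lemma with a bounded--variation amplitude are in hand, the remaining estimates are routine.
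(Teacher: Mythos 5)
Your argument is correct, and it reaches the stated conclusion by a genuinely different route from the paper's. The paper selects a measurable radius function $r(x)$, partitions $\R$ into $A_1=[-M,M]$ and three outer regions according to whether $r(x)\le|x|/2$ and whether $M_\gamma f(x)$ exceeds $|x|^{-l}$, controls the small-radius regions via the embedding $C_{p,0}\hookrightarrow L\log L$ together with an $L\log L\to L^1$ bound for the Hardy--Littlewood operator over sets of finite measure, and then, on the large-radius region, performs an explicit Taylor expansion of $t\mapsto\gamma(x,x-t)$ in powers of $t$ (requiring the binomial-series Lemma \ref{serie}), isolates the linear-in-$t$ term of the phase, and integrates by parts against it, pushing the higher-order terms into the amplitude. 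You instead avoid the radius selection altogether by splitting the supremum in $r$ directly into $r\le x/2$ and $r>x/2$; you replace the $L\log L$ machinery with the pointwise embedding $C_{p,l}\subset L^\infty$, a dyadic decomposition, the strong $L^q$--bound for $M$ ($1<q<1+l$), and log-convexity of $L^q$ norms; and you replace the paper's Taylor-expansion-plus-integration-by-parts with a direct application of the first-derivative van der Corput lemma with a bounded-variation amplitude, after verifying that the leading term $c_m(x)d_m t^{d_m-1}$ dominates $\partial_t\gamma$ and $\partial_t^2\gamma$ on $[3x/4,5x/4]$ for $x\ge x_0(\gamma)$. This buys you a shorter proof with fewer auxiliary lemmas: you do not need Lemmas \ref{llogl}, \ref{embedding}, or \ref{serie}, and the power counting that produces the constraints $d_m>2$ (case (1), allowing $p=\infty$) and $p<\infty$ (case (2), $d_m=2$) is transparent from the exponents $x^{-d_m}\|f\|_\infty$ and $x^{-(d_m-1/p')}\|f'\|_p$. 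The trade-off is that the paper's self-contained integration by parts makes no appeal to van der Corput and makes the role of the oscillation in the remainder terms (via the $\sum_l l|\binom{d_j}{l}|$ estimate) fully explicit, which feeds naturally into the generalizations they discuss; your argument, by contrast, concentrates the curvature input entirely in the two conditions ``$|\partial_t\gamma|\gtrsim x^{d_m-1}$ and $\partial_t\gamma$ monotone on $[3x/4,5x/4]$,'' which is both cleaner and slightly more flexible (it would accommodate non-polynomial phases with the same one-sided lower bound). As a minor remark, your dyadic argument only needs $l>0$ and the far/near split only needs $l>0$, so you in fact prove the result on a slightly wider range of $l$ than stated; this is consistent with the paper, which treats $l=1+\eps$ with $\eps\in(0,1)$ as a convenient normalization rather than a sharp restriction.
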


In order to prove Theorem \ref{thm:main_positive}, we must split the real line into sets where there is almost no oscillation, so that $M_{\gamma}$ might resemble $M$ `too much', and where the oscillatation is stronger, which produces decay. The key in this 
decomposition is that the oscillatory nature of our phase $\gamma$ inside the maximal function forbids the former set to be large, and the same oscillation produces decay in the complement. 

The main feature in order to obtain better decay in the case of the set where we capture oscillation is an analysis of the \emph{radius function}, i.e., a measurable choice $r:\R \to \R_{+}$ such that 
\[
M_{\gamma}f(x) \sim \left| \frac{1}{2r(x)} \int_{x-r(x)}^{x+r(x)} f(s) e^{i \gamma(x,x-s)} \, \mmd s, \right|
\]
for all $x \in \R.$ This is not a surprise, given that recent work in the theory of regularity of maximal functions carries out similar considerations. See, for instance, \cite{Kurka2015,Tanaka2002,Luiro2007,Luiro2018,LuiroMadrid2017,AldazPerezLazaro2006} and the references 
therein. 

Unfortunately, we do not seem to be able to get rid of the demand on first-order regularity in the subspace $X$ with the present proof. In order to do so, a more sophisticated argument than our crude integration by parts must be employed. We comment more on that 
in the last section of this manuscript. 

This article is organized into four sections, the first two devoted to proving Theorems \ref{thm:main_negative} and \ref{thm:main_positive}, respectively. In the last section, we discuss some generalizations, natural open questions that arise from our main theorems and connections to 
other related topics. 

\subsection{Notation} We will use the modified Vinogradov notation; i.e., we will write $A \lesssim B$ to denote the existence of an absolute constant $C>0$ such that $A \le C \cdot B,$ and analogously for $A \gtrsim B.$ Additionally, $C,C',\tilde{C}$ will in general denote an absolute 
constant whose exact value is not important to us, and may change from line to line. 

\subsection*{Acknowledgements} The authors would like to express their gratitude towards Fr\'ed\'eric Bernicot and Cristina Benea for organizing the event `Atélier d'Analyse Harmonique', in which the first two authors were 
introduced to the problem of endpoints bounds for oscillatory maximal functions. We would also like to thank Victor Lie for suggesting the problem and for helpful discussions. T.B. and C.B. acknowledge financial support from the Funda\c c\~ao de Amparo \`a Pesquisa do Estado do Rio de Janeiro (FAPERJ). J.P.G.R. acknowledges the Deutscher Akademischer Austauschdienst (DAAD) and the Hausdorff Center for Mathematics 
for financial support. 

\section{Proof of the unboundedness results} 

\subsection{Proof of Theorem \ref{thm:main_negative}, Part 1} We show first that the maximal operator $M_{\gamma}$ is not bounded from $H^{1}$ to $L^1$ when $\gamma(x,t)=\alpha(x)\beta(t)$ as in the hypotheses of the theorem. 
Let $f_n(t):= \frac{n}{2}\chi_{[0,\frac{1}{n}]}(t)-\frac{n}{2}\chi_{[-\frac{1}{n},0]}(t)$. We observe that $\|f_{n}\|_{1}=1$ and $\|f_{n}\|_{H^{1}_{at}}\leq 1$, as $f_{n}$ is an atom. 

Let $M = \|\alpha\|_{\infty},$ and choose $\delta > 0$ so that $\beta$ differs by at most $\frac{1}{10M}$ from $\beta(0)$ in $(-\delta,\delta).$ 
%
In order to prove that there is $f \in H^1(\R)$ so that $M_{\gamma}f \not\in L^1,$ we let 
$$h_k(x) = \frac{2}{k (\log(k+1))^2} f_{k (\log{(k+1)})^2}(x),$$
and consider the function 
\[
g(x) = \sum_{k \ge 1} h_k(x-c_k), 
\]
where we define the translating points $c_k = k^2.$  We start by noticing that 
\begin{align*}
M_{\gamma}g(x) & \ge \frac{1}{2|x-c_k|} \left|\int_{0}^{\frac{1}{k(\log(k+1))^2}} e^{i \alpha(x) \beta(x-c_k-t)} \, \mmd t\right| \cr 
 & \ge \frac{1}{10|x-c_k|} \cdot \frac{1}{k (\log(k+1))^2}, \cr 
\end{align*}
for all $x$ so that $ \delta > x-c_k \ge \frac{1}{k (\log(k+1))^2}.$ The first inequality above holds as the difference between consecutive terms of the sequence $\{k^2\}_{k \ge 1}$ diverges with $k,$ which implies that computing the maximal function of $g$ is, locally, the same as calculating that of $h_k(\cdot - c_k).$ The 
second inequality, on the other hand, follows by direct comparison of the oscillatory factor being integrated with the constant $e^{i\alpha(x) \cdot \beta(0)}.$ 

In order to conclude, notice that 
\[
\|g\|_{H^1} \le \sum_{k \ge 1} \frac{1}{k (\log(k+1))^2} \|f_k\|_{H^1} = \sum_{k \ge 1} \frac{1}{k (\log(k+1))^2} < + \infty.
\]
On the other hand, the considerations above imply directly that 
\begin{align*}
\int_{\R} M_{\gamma}g(x) \, \mmd x & \ge \sum_{k \ge 1} \int_{ c_k+ \frac{1}{k (\log(k+1))^2}}^{c_k + \delta} \frac{1}{4|x-c_k|} \cdot \frac{1}{k (\log(k+1))^2} \, \mmd x \cr 
 & \gtrsim \sum_{k \ge 1} \frac{\log(\delta \cdot k (\log(k+1))^2)}{k (\log(k+1))^2} \gtrsim_{\delta} \sum_{k \ge 1} \frac{1}{k \log(k+1)} = +\infty.  
\end{align*} 

\subsection{Proof of Theorem \ref{thm:main_negative}, Part 2} 

For $\beta>0$, define the atom $f_\beta (x) = \frac{1}{2\beta}\chi_{[0,\beta]}-\frac{1}{2\beta}\chi_{[-\beta,0]}$, then $\|f_{\beta}\|_{1}=\|f_{\beta}\|_{H^{1}_{at}}=1$, for all $\beta$.  We want to prove that $\|M_{\gamma}(f_{\beta})\|_{1}\rightarrow \infty$ as $\beta\rightarrow 0$, what implies that $M_{\gamma}$ is not bounded from $H^{1}$ to $L^1.$ 

Now, negative powers of $t$ can induce oscillation around the origin, and the previous argument fails to work. We will see, however, that we can still find a good range to integrate $M_{\gamma}f_{\beta}$, which increases as $\beta$ decreases, to show that we cannot have boundedness in $H^{1}$.

For all $x> 1+ \beta$, we have 
\[ M_\gamma f_\beta (x)= \sup_{ x-\beta \leq r \leq x+ \beta}\frac{1}{2r} \left| \int_{x-r}^\beta f_{\beta}(t)e^{i \gamma(x,x-t)}\mmd t \right| \geq \frac{1}{4\beta x} \left| \int_{x-\beta}^{x} e^{i\gamma(x,t)}\mmd t \right|, \]
if we let $r=x$. Then we observe that
\begin{equation}\label{beta}
    \begin{split}
     \beta =& \left| \int_{x-\beta}^{x} e^{i\gamma(x,x-\beta)}\mmd t \right| \leq \left| \int_{x-\beta}^{x} e^{i\gamma(x,t)} \mmd t\right| + \int_{x-\beta}^{x}| e^{i\gamma(x,t)} - e^{i\gamma(x,x-\beta)} | \mmd t \\
     &\leq \left| \int_{x-\beta}^{x} e^{i\gamma(x,t)} \mmd t\right| + \int_{x-\beta}^{x}| \gamma(x,t) - \gamma(x,x-\beta)| \mmd t.
    \end{split}
\end{equation} 

In order to estimate the second integral in (\ref{beta}), we notice that
\[ | \gamma(x,t) - \gamma(x,x-\beta)| \leq  \int_{x-\beta}^{t} |\partial_u \gamma(x,u)|\, \mmd u \leq \beta \sup_{x-\beta \leq u \leq t} |\partial_u \gamma(x,u)| ,\]
and, for $ x-\beta \leq u \leq t\leq x$, 
\[ | \partial_u\gamma(x,u)| = \left| \sum_{j=-d}^{d} c_j j u^{j-1} \right| \leq \sum_{j=-d}^{1} |c_j| |j| + u^{d-1}\sum_{j=2}^{d}|c_j|j\leq c u^{d-1} \leq c x^{d-1},\]
for $c = 2d\max\{\|c_j\|_{\infty}\}_{j=-d}^d$, where we used that $u\geq x-\beta\geq 1$. So,
\begin{equation*}
        \int_{x-\beta}^{x} |\gamma(x,t)-\gamma(x,x-\beta)|\mmd t\leq \int_{x-\beta}^{x} \beta c x^{d-1}\mmd t=c \beta^2 x^{d-1}
\end{equation*}
and returning to what we had in inequality (\ref{beta}), we get 

\begin{equation*}
    \begin{split}
        &\beta\leq \left|\int_{x-\beta}^{x} e^{i\gamma(x,t)}\mmd t\right| + C\beta^2 x^{d-1}\\
        \Rightarrow &\left|\int_{x-\beta}^{x} e^{i\gamma(x,t)}\mmd t\right|\geq \beta(1-C\beta x^{d-1})\geq \frac{\beta}{2}
    \end{split}
\end{equation*}
if $x\leq (\frac{1}{2C\beta})^{\frac{1}{d-1}}$, since:
$$1-C\beta x^{d-1}\geq 1/2\Leftrightarrow 1/2\geq C\beta x^{d-1}\Leftrightarrow x\leq (\frac{1}{2C\beta})^{\frac{1}{d-1}}.$$

So we conclude that, assuming $\beta$ small enough, for all $1+\beta\leq x\leq (\frac{1}{2C\beta})^{\frac{1}{d-1}}$, it holds that $M_{\gamma}f_{\beta}(x)\geq \frac{1}{4\beta x}\cdot\frac{\beta}{2}=\frac{1}{8x}$ and
\begin{equation}\label{eq lower bound}
    ||M_{\gamma}f_{\beta}||_{1}\geq \frac{1}{8} \int_{1+\beta}^{(\frac{1}{2C\beta})^{\frac{1}{d-1}} }\frac{1}{x}\mmd x= \frac{1}{8} \{\log\left((\frac{1}{2C\beta})^{\frac{1}{d-1}}\right)-\log(1+\beta)\} \ge C |\log (\beta)|,
\end{equation}
 as $\beta\rightarrow 0$. This already proves unboundedness for this case of $\gamma.$ 
 
In order to construct $f \in H^1$ such that $M_{\gamma}f \notin L^1,$ we let again $\beta_n = \frac{1}{n (\log (n+1))^2},$ and consider the function
\[
g(x) = \sum_{n \ge 1} \beta_n f_{n}(x-b_n),
\]
where we choose $b_n = 2^{2^n}$ and $f_n  = f_{\frac{1}{\beta_n}}$. As $M_{\gamma} \le M$ pointwise, we have that $\beta_n \cdot M_{\gamma}(f_{n}) \le \frac{\beta_n}{|x|},$ for $|x| \ge C_0,$ 
where $C_0$ is an universal constant. 

By the choice of $b_n$ and an argument entirely analogous to \eqref{eq lower bound}, we have that 
\[
M_{\gamma}(\tau_{b_n} f_n) \ge \frac{1}{8|x-b_n|}, 
\]
if $1 + \beta_n \le |x - b_n| \le \left(\frac{1}{2C\beta_n}\right)^{1/(d-1)},$ where $\tau_bf(x) = f(x-b)$ denotes the translation operator. An application of triangle's inequality then yields 

\begin{align*} 
M_{\gamma}g(x) & \ge  \beta_n M_{\gamma}(\tau_{b_n}f_{n})(x) - \sum_{m \ne n} \beta_m M f_{m}(x) \cr
 & \ge \frac{\beta_n}{8|x-b_n|} - \sum_{m \ne n} \frac{\beta_m}{|x-b_m|} \cr
 & \ge \frac{\beta_n}{8|x-b_n|} - \frac{C}{2^{2^n}} - \sum_{m > n} \frac{4\beta_m}{2^{2^m}} \ge \frac{C'\beta_n}{|x-b_n|},
\end{align*}
as $|x-b_m| \ge |b_m - b_n| - |x-b_n| \ge \frac{1}{4} \max \{b_m,b_n\},$ for $m,n$ sufficiently large. 
This computation allows us to estimate
\begin{align*}
\int_{\R} M_{\gamma}g(x) \, \mmd x & \ge C' \sum_{n \ge 1} \int_{1 + \beta_n \le |x - b_n| \le \left(\frac{1}{2C\beta_n}\right)^{1/(d-1)}} \frac{\beta_n}{|x-b_n|} \, \mmd x \cr 
 & \gtrsim \sum_{n \ge 1} \beta_n |\log(\beta_n)| \gtrsim \sum_{n \ge 1} \frac{1}{n \log (n+1)} = +\infty,
\end{align*}
whereas 
\begin{align*}
\|g\|_{H^1} \le \sum_{n \ge 1} \beta_n \|f_{\beta_n}\|_{H^1} = \sum_{n \ge 1} \frac{1}{n \log(n+1)^2} <+\infty,
\end{align*}
which provides us with the desired counterexample. 

\begin{remark}
Let $\gamma(t)$ be a function in $C^{2}(\mathbb{R}\backslash \{0\})$ that satisfies the following conditions:
\begin{enumerate}
\item $\gamma'(t)\neq 0$ for all $t\neq 0$.
    \item  $|\gamma'|$ is non-decreasing for $t>0$ and $|\gamma'|$ is non-increasing for $t<0$.
    \item The function $\dfrac{1}{x\gamma'(x)}$ is integrable at infinity, that is, there exists a $R>0$, such that $$\int_{|x|>R} \dfrac{1}{|x\gamma'(x)|}<\infty.$$
\end{enumerate}
Define $M_{\gamma}f(x)=\sup_{r>0}\frac{1}{2r}|\int_{x-r}^{x+r}f(t)e^{i\gamma(x-t)}\mmd t|$.
Then, every characteristic function of bounded interval is mapped in $L^1(\R)$ by $M_{\gamma}$. By sublinearity, every step function is mapped in $L^1$. In particular, this is satisfied for $\gamma(t)=t^k$ with $k\geq 2$ natural number or $\gamma(t)=|t|^k$, $k>1$ real number. 

Indeed, observe that since $\gamma'$ is a continuous function that is never zero in $(0,\infty)$ and $|\gamma'|$ is non-decreasing for $t>0$,
then $ \frac{1}{\gamma'}  $ is monotonic in this interval and $\dfrac{d}{dt}\left(\dfrac{1}{\gamma'}\right)$  does not change sign in $(0,\infty)$. 
The same holds for $(-\infty,0)$.

Second, $M_{\gamma}$ commutes with translations, as $\gamma$ does not depend on $x$. So, we can restrict ourselves to the case $f_{\beta}(t)=\chi_{[-\beta,\beta]}(t)$, some $\beta>0$.
In this case, for every $x>\beta$, we have:

\begin{equation*}
\begin{split}
    M_{\gamma}(f_{\beta})(x)= \sup_{r>0}\frac{1}{2r}\left| \int_{x-r}^{x+r} f_{\beta}(t)e^{i\gamma(x-t)}\mmd t \right|
    & \leq\frac{1}{2(x-\beta)}\sup_{x-\beta\leq r \leq x+\beta}\left| \int_{x-\beta}^{r} e^{i\gamma(t)}\mmd t \right|,
\end{split}   
\end{equation*}
and for any $0<x-\beta\leq r \leq x+\beta$,
\begin{equation*}
\begin{split}
    \left| \int_{x-\beta}^{r} e^{i\gamma(t)}\mmd t \right|
    &=\left|\int_{x-\beta}^{r}\frac{1}{i\gamma'(t)}\dfrac{\mmd}{\mmd t}(e^{i\gamma(t)})\mmd t \right|\cr
    &=\dfrac{1}{|\gamma'(r)|}+\dfrac{1}{|\gamma'(x-\beta)|}+\left| \frac{1}{\gamma'(r)}-\frac{1}{\gamma'(x-\beta)} \right|,
\end{split}
\end{equation*}
by integration by parts, as $\dfrac{\mmd}{\mmd t} (\dfrac{1}{\gamma'})$ does not change sign in $(0,\infty)$. Also, since $|\gamma'|$ is non-decreasing in $(0,\infty)$, $|\gamma'(r)|\geq |\gamma'(x-\beta)|$, we conclude that for all $x>\beta$,
\begin{equation*}
\begin{split}
        M_{\gamma}(f_{\beta})(x)\leq \frac{2}{(x-\beta)|\gamma'(x-\beta)|}.
\end{split}
\end{equation*}

\noindent Analogously, if $x<-\beta$,
\begin{equation*}
\begin{split}
    M_{\gamma}(f_{\beta})(x)&= \sup_{-\beta-x\leq r \leq \beta-x}\frac{1}{2r}\left| \int_{-\beta}^{x+r} e^{i\gamma(x-t)}\mmd t \right|\\
     &\leq\frac{1}{2(-x-\beta)}\sup_{-x-\beta\leq r \leq \beta-x}\left| \int_{-r}^{x+\beta} e^{i\gamma(t)}\mmd t \right|
\end{split}   
\end{equation*}
and
\begin{equation*}
\begin{split}
    \left| \int_{-r}^{x+\beta} e^{i\gamma(t)}\mmd t \right|
    \leq\dfrac{1}{|\gamma'(x+\beta)|}+\dfrac{1}{|\gamma'(-r)|}+\left| \frac{1}{\gamma'(x+\beta)}-\frac{1}{\gamma'(-r)} \right|\leq \frac{4}{|\gamma'(x+\beta)|},
\end{split}
\end{equation*}
since $-r\leq x+\beta<0$ implies that $|\gamma'(-r)|\geq|\gamma'(x+\beta)|$.

\vspace{0.2cm}
By the previous conclusions and by the fact that for all $x \in \mathbb{R}$, $M_{\gamma}(f_{\beta})(x)\leq ||f_{\beta}||_{\infty}=1$, we have that, for all $M>\max\{2\beta,2R\}$:
\begin{equation*}
\begin{split}
   ||M_{\gamma}(f_{\beta})||_{1} &\leq 2M+\int_{M}^{\infty} \frac{2}{(x-\beta)|\gamma'(x-\beta)|}\mmd x+\int_{-\infty}^{-M}\dfrac{2}{(|x|-\beta)|\gamma'(x+\beta)|}\mmd x\\
 &\leq 2M+2\int_{M}^{\infty} \frac{1}{(x/2)|\gamma'(x/2)|}\mmd x+2 \int_{-\infty}^{-M}\dfrac{1}{(|x|/2)|\gamma'(x/2)|}\mmd x\\
 &=2M+4\int_{|x|>M/2} \dfrac{1}{|x||\gamma'(x)|}\mmd x< \infty.
\end{split}
\end{equation*}

If one looks carefully at the proof above, one notices that the first and second conditions can be replaced by
    \begin{enumerate}
        \item $\exists T>0$ such that $|\gamma'|>0$ and it is non-decreasing in $(T, \infty)$.
        \item $\exists \tilde{T}<0$ such that 
        $|\gamma'|>0$ and it is non-increasing in $( -\infty, \tilde{T})$,
    \end{enumerate}
and the same conclusion still holds. We can therefore prove that oscillatory maximal functions associated to curves like $\gamma(t):=\sum_{j=1}^{m}c_j|t|^{d_j}$ where $d_1<d_2<...<d_m$ are real numbers, $d_m>1$ and $c_m\neq 0$, do map 
characteristic functions, and thus also step functions, into $L^1(\R).$ Since step functions are dense in $L^{1}$ and $M_{\gamma}f\in L^{1}$ for any step function $f$, one is tempted to conclude that $M_{\gamma}f\in L^{1}$ for all $f\in L^{1}$, 
although this is not true, as we cannot control the quantities $\{\frac{\|M_{\gamma}f\|_{1}}{\|f\|_{1}}: f \text{ non-zero step function}\}$ uniformly by some constant, and otherwise it would contradict Theorem \ref{thm:main_negative}. 

\end{remark}

\section{Proof of the boundedness results} 

\subsection{Preliminaries} Here and henceforth, we work with the class of functions 
\[
C_{p,l} =\{ f  \in \mathcal{S}'(\R) \colon \|(1+|x|^{l})f\|_{1}+\|f'\|_{p} < + \infty\}.
\]
This is a hybrid between a weighted $L^1$ and a usual Sobolev space on the real line. As such, it inherits many of the good properties of those spaces. The following two propositions
illustrate this fact.

\begin{proposition} \label{fact3}
Let $1\leq p\leq \infty$ and $l\in [0,\infty)$. Any $f\in C_{p,l}$ coincides almost everywhere with a locally absolutely continuous function.
\end{proposition}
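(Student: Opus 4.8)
The plan is to reduce the statement to two classical facts: that indefinite Lebesgue integrals of $L^1_{loc}$ functions are locally absolutely continuous, and that a distribution on $\R$ with vanishing derivative agrees almost everywhere with a constant.

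First I would unpack the definition of $C_{p,l}$. Since $1 \le 1+|x|^l$ pointwise, finiteness of $\|(1+|x|^l)f\|_1$ forces $\|f\|_1 \le \|(1+|x|^l)f\|_1 < +\infty$, so $f$ is represented by a genuine function in $L^1(\R) \subset L^1_{loc}(\R)$, rather than a more singular tempered distribution. Likewise $\|f'\|_p < +\infty$ says precisely that the distributional derivative $g := f'$ is represented by a function in $L^p(\R)$, and for every $1 \le p \le \infty$ one has $L^p(\R) \subset L^1_{loc}(\R)$ by H\"older's inequality on bounded intervals. Thus both $f$ and $g$ are locally integrable functions, with $g$ the distributional derivative of $f$.

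Next I would set $F(x) := \int_0^x g(t)\,\mmd t$. Absolute continuity of the Lebesgue integral shows that $F$ is absolutely continuous on every bounded interval, hence locally absolutely continuous on $\R$, and the Lebesgue differentiation theorem gives $F' = g$ almost everywhere; a short integration-by-parts computation against $\varphi \in C_c^\infty(\R)$ (justified by Fubini) shows that $F' = g$ holds in $\mathcal{D}'(\R)$ as well. Consequently $(f - F)' = g - g = 0$ in $\mathcal{D}'(\R)$.

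The only step that is not pure bookkeeping is the conclusion that a distribution $u$ with $u' = 0$ on $\R$ agrees a.e. with a constant; I would either invoke this standard lemma or reprove it in a sentence, by noting that every $\varphi \in C_c^\infty(\R)$ with $\int \varphi = 0$ is of the form $\psi'$ with $\psi \in C_c^\infty(\R)$, so that for a fixed bump $\chi$ of integral $1$ and arbitrary $\varphi$ one has $\langle u, \varphi\rangle = \big(\int\varphi\big)\langle u,\chi\rangle$, i.e. $u$ acts as integration against the constant $c := \langle u, \chi\rangle$. Applying this to $u = f - F$ yields $f = F + c$ almost everywhere for some $c \in \C$, and since $F$ is locally absolutely continuous (and constants trivially are), $f$ coincides a.e. with a locally absolutely continuous function, as claimed. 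I expect the only mild obstacle to be the care needed at the first step — verifying that the two norms in the definition of $C_{p,l}$ genuinely force $f$ and $f'$ to be ordinary $L^1_{loc}$ functions — after which the argument is the standard one-dimensional equivalence between Sobolev-type regularity and absolute continuity.
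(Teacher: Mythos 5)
Your proposal is correct and follows essentially the same route as the paper: define the antiderivative of the weak derivative, verify it is locally absolutely continuous via absolute continuity of the Lebesgue integral, and conclude that it differs from $f$ by a constant since their difference has vanishing distributional derivative. The only differences are cosmetic — you spell out why $f$ and $f'$ are genuine $L^1_{loc}$ functions and sketch the zero-derivative lemma, both of which the paper takes for granted.
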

\begin{proof}
In order to avoid confusion with the notation, let us temporarily denote by $Df$ the weak derivative of $f$ and $f'$ the derivative in the classical sense.

If $f\in C_{p,l}$, then $Df\in L^{p}(\R)$, in particular, $Df\in L^{1}_{\text{loc}}(\R)$ and we can define:
$$\tilde{f}(x)=\int_{0}^{x}Df(t)\mmd t.$$

Let $a<b$, $\eps>0$ and $\{[a_j,b_j]\}_{j}$ a finite or countably infinite collection of nonoverlapping subintervals of $[a,b]$.
Since $g:=Df\cdot \chi_{[a,b]}  \in L^{1}(\R)$ there exists a $\delta>0$ such that for $E$ measurable set with $|E|<\delta$ we have $\int_{E}|g|<\eps$, from this follows that if $\sum_{j}(b_j-a_j)<\delta$, then
$$\sum_{j}|\tilde{f}(b_j)-\tilde{f}(a_j)|\leq \int_{\cup(a_j,b_j)}|Df|=\int_{\cup(a_j,b_j)}|g|<\eps.$$
So, $\tilde{f}\in AC_{\text{loc}}(\R)$.
It holds then that $\tilde{f}$ is weakly differentiable with weak derivative equal to $Df$, and this implies that $\tilde{f}-f$ has identically zero weak derivative. Therefore, there exists a constant $c$ such that $f=c+\tilde{f}$ a.e. with $c+\tilde{f}\in AC_{\text{loc}}(\R)$, and $Df=D\tilde{f}=\tilde{f}'$ almost everywhere.
\end{proof}

\begin{proposition}
For any $1\leq p\leq \infty$ and $1\leq q\leq \infty$, we have
$$\|f\|_{q}\leq 2\|f\|_{C_{p,0}}\text{ , for all }f\in C_{p,0}.$$
\end{proposition}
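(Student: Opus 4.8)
The statement to prove is that $\|f\|_q \le 2\|f\|_{C_{p,0}} = 2(\|f\|_1 + \|f'\|_p)$ for every $f \in C_{p,0}$ and every $1 \le q \le \infty$. The plan is to exploit Proposition \ref{fact3}: $f$ agrees a.e. with a locally absolutely continuous representative, so we may work with the pointwise values of that representative. The key observation is that the bound $\|f\|_1 + \|f'\|_p < +\infty$ forces $f$ to decay at infinity, so that $f \in L^\infty$ with a controlled bound; once $f \in L^1 \cap L^\infty$, interpolation gives $f \in L^q$ for all intermediate $q$ with the right constant.

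First I would establish the $L^\infty$ bound $\|f\|_\infty \le \|f\|_1 + \|f'\|_p$. The natural approach is the one-dimensional Sobolev/Gagliardo--Nirenberg-type argument: for the absolutely continuous representative, write $f(x)^2$ (or $|f(x)|$ directly) as an integral of its derivative. Concretely, since $f$ is locally absolutely continuous and $f \in L^1$, there is a sequence $x_n \to +\infty$ with $f(x_n) \to 0$ (otherwise $f \notin L^1$), so $f(x) = -\int_x^\infty f'(t)\,\mmd t$ in a suitable limiting sense — but to get the clean constant $1$ it is cleaner to use the identity $|f(x)| \le \int_{\R} |f'|$ only when that is finite, which requires $p=1$; for general $p$ one instead uses $f(x)^2 = 2\int_{-\infty}^x f f' $, giving $|f(x)|^2 \le 2\|f\|_1 \|f'\|_\infty$ when $p = \infty$, and an interpolated version for general $p$ via Hölder. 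The robust route that yields exactly the factor $2$: show $\|f\|_\infty \le \|f\|_1^{1-\theta}\|f'\|_p^{\theta} \cdot C$ and then bound the product of powers by the sum; but the simplest sufficient claim is just $\|f\|_\infty \le \|f\|_{C_{p,0}}$, which follows from $|f(x)| \le \min(\text{something involving } \|f\|_1, \text{ something involving } \|f'\|_p)$ combined over the scale where they balance. I would carry this out and record $\|f\|_\infty \le \|f\|_1 + \|f'\|_p = \|f\|_{C_{p,0}}$.

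Second, with $f \in L^1 \cap L^\infty$ in hand, for any $1 < q < \infty$ I would use the elementary interpolation inequality $\|f\|_q^q = \int |f|^q \le \|f\|_\infty^{q-1}\int|f| = \|f\|_\infty^{q-1}\|f\|_1$, hence $\|f\|_q \le \|f\|_\infty^{1-1/q}\|f\|_1^{1/q} \le \max(\|f\|_\infty, \|f\|_1) \le \|f\|_{C_{p,0}}$, using that a weighted geometric mean is bounded by the maximum. Together with the endpoint cases $q=1$ (trivially $\|f\|_1 \le \|f\|_{C_{p,0}}$) and $q = \infty$ (the bound from the first step), this gives $\|f\|_q \le \|f\|_{C_{p,0}} \le 2\|f\|_{C_{p,0}}$, which is even stronger than claimed; the factor $2$ presumably only appears because the $L^\infty$ step, done crudely, loses a constant.

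The main obstacle is the $L^\infty$ estimate with a good constant, i.e. making precise the "$f$ decays at infinity so we can integrate its derivative from infinity" argument for general $p \in [1,\infty]$. For $p=1$ it is immediate; for $p>1$ one must either invoke a one-dimensional Sobolev embedding $W^{1,p} \hookrightarrow L^\infty$ and combine with the $L^1$ control, or run the $f^2 = 2\int f f'$ trick and apply Hölder with exponents $p$ and $p'$ — but then one needs $f \in L^{p'}$, which is itself only known once we have interpolation, creating a mild circularity that must be broken (e.g. by first proving $f \in L^\infty$ via a truncation/limiting argument that does not presuppose $L^{p'}$ membership, or by a bootstrapping on $\|f\chi_{\{|f|>\lambda\}}\|$). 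I expect the cleanest fix is to prove directly, from local absolute continuity and $f \in L^1$, that $\lim_{|x|\to\infty} f(x) = 0$, then write $|f(x)| = |\int_x^{\infty} f'| $; for $p > 1$ this integral need not converge absolutely, so one instead pairs the two pieces of information at the optimal scale $R$, bounding $|f(x)|$ by an average of $|f|$ over an interval of length $R$ plus the oscillation of $f$ across that interval controlled by $R^{1-1/p}\|f'\|_p$, and optimizing in $R$ — this is where the factor $2$ genuinely enters.
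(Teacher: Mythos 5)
Your proposal is correct in outline but is a sketch rather than a complete argument, and it takes a genuinely different route from the paper. You aim at a two--step Gagliardo--Nirenberg style argument: first establish $f\in L^{\infty}$ by averaging plus oscillation,
\[
|f(x)|\le \frac{1}{R}\int_{x}^{x+R}|f(y)|\,\mmd y+\sup_{y\in[x,x+R]}|f(x)-f(y)|\le \frac{\|f\|_{1}}{R}+R^{1/p'}\|f'\|_{p},
\]
optimize in $R$ to get $\|f\|_{\infty}\le 2\max(\|f\|_1,\|f'\|_p)$ (and hence $\le 2\|f\|_{C_{p,0}}$), and then interpolate via $\|f\|_q\le\|f\|_1^{1/q}\|f\|_{\infty}^{1-1/q}\le\max(\|f\|_1,\|f\|_{\infty})$. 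This is a valid plan; the mild circularity you worry about with the $f^2=2\int ff'$ identity is indeed avoided by the averaging route, which works uniformly in $p\in[1,\infty]$ (with $p'=\infty$ treated by letting $R\to\infty$). The paper, by contrast, never proves a scale--optimized $L^\infty$ bound: it normalizes $\|f\|_{C_{p,0}}=1$, uses continuity of the absolutely continuous representative to write $\{|f|>1\}=\bigcup_i(a_i,b_i)$ and observes $|\{|f|>1\}|\le\|f\|_1<1$, then notes that since $|f(a_i)|=1$ on each component boundary, $|f(x)|\le 1+\|f'\|_p(x-a_i)^{1/p'}\le 1+|E|^{1/p'}\le 2$, giving $\|f\|_\infty\le 2$ with no optimization at all. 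The $L^q$ bound then follows by splitting $\|f\|_q^q$ over $\{|f|\le 1\}$ and $\{|f|>1\}$ directly, again bypassing interpolation. Both approaches lose the same factor $2$; the paper's is shorter because the level--set threshold $\lambda=1$ automatically plays the role of your optimal scale $R$. One small correction: with the paper's definition, $\|f\|_{C_{p,0}}=\|(1+|x|^0)f\|_1+\|f'\|_p=2\|f\|_1+\|f'\|_p$, not $\|f\|_1+\|f'\|_p$ as you write; this only makes your target weaker, so your argument still applies, but the early suggestion that $\|f\|_\infty\le\|f\|_{C_{p,0}}$ with constant $1$ is not something your method delivers and is not claimed by the paper either.
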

\begin{proof}
Take $f\in C_{p,0}$ with $\|2f\|_{1}+\|f'\|_{p}=\|f\|_{C_{p,0}}=1$ and $f\in AC_{\text{loc}}(\R)$. We must show that $ \|f\|_{q}\leq 2$.

By the continuity of $f$, $E:=\{x\in \R\colon |f(x)|>1\}=\cup_{i}(a_i,b_i)$ is a countable union of disjoint open intervals. Also, $|E|\leq \int_{|f|>1}|f|\leq \|f\|_{1}< 1$, so we have that $\forall i$ and $\forall\, x\in (a_i,b_i)$
$$|f(x)|\leq |f(a_i)|+\int_{a_i}^{x}|f'(t)|\mmd t\leq 1+\|f'\|_{p}(x-a_i)^{1/p'}\leq 1+1\cdot|E|^{1/p'}\leq 2.$$

If $q=\infty$, we are done. If not,
$$\|f\|_{q}^{q}=\int_{|f|\leq 1}|f(t)|^{q}\mmd t+\int_{|f|>1}|f(t)|^{q}\mmd t
\leq \int_{|f|\leq1}|f|+2^{q-1}\int_{|f|>1}|f|\leq 1+2^{q-1}\leq 2^{q}.$$

\end{proof}

Before proving our main positive result, we start by stating some elementary Lemmas, which will help us throughout the proof. 

\begin{lemma}\label{llogl} Let $S \subset \R$ be a set with finite Lebesgue measure. It holds that 
\[
\int_{S} Mf(x) \, \mmd x \le |S| + C\int_{\R} |f(x)| \log(e+|f(x)|) \, \mmd x,
\]
where $M$ denotes the Hardy-Littlewood maximal operator.
\end{lemma}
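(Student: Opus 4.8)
The plan is to reduce the statement to the classical local $L\log L$ estimate of Stein, which asserts that for the Hardy--Littlewood maximal operator one has $\int_{Q} Mf \lesssim |Q| + \int_{\R} |f| \log(e + |f|)$ on any fixed cube (or here, interval) $Q$, but with the subtlety that $S$ is an arbitrary measurable set of finite measure, not an interval. So the first step is to recall the distributional form of the weak-type estimate: by the Calder\'on--Zygmund decomposition (or directly from the weak $(1,1)$ inequality applied to the truncations $f\chi_{\{|f|>\lambda/2\}}$), one has the refined bound
\[
|\{x : Mf(x) > \lambda\}| \le \frac{C}{\lambda} \int_{\{|f| > \lambda/2\}} |f(x)| \, \mmd x
\]
for every $\lambda > 0$. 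This is the standard improvement of weak $(1,1)$ that fuels all $L\log L$ arguments.

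Next I would write the integral over $S$ using the layer-cake formula, splitting at height $1$:
\[
\int_{S} Mf(x)\, \mmd x = \int_0^{\infty} |\{x \in S : Mf(x) > \lambda\}|\, \mmd\lambda \le |S| + \int_1^{\infty} |\{x : Mf(x) > \lambda\}|\, \mmd\lambda.
\]
For the first part I simply bounded $|\{x\in S: Mf>\lambda\}| \le |S|$ for $\lambda \le 1$; this is exactly where the hypothesis $|S| < \infty$ is used, and it produces the $|S|$ term in the claimed inequality. For the tail $\lambda \ge 1$ I insert the refined weak-type bound above, obtaining
\[
\int_1^\infty |\{Mf > \lambda\}|\, \mmd\lambda \le C \int_1^\infty \frac{1}{\lambda} \int_{\{|f| > \lambda/2\}} |f(x)|\, \mmd x\, \mmd\lambda = C \int_{\R} |f(x)| \left( \int_1^{2|f(x)|} \frac{\mmd\lambda}{\lambda} \right) \mmd x
\]
after Tonelli, where the inner integral is interpreted as $0$ when $2|f(x)| \le 1$. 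The inner integral equals $\log(2|f(x)|)$ when $|f(x)| \ge 1/2$, which is $\lesssim \log(e + |f(x)|)$ uniformly, and is $0$ otherwise; in all cases it is $\lesssim \log(e+|f(x)|)$, so the tail is bounded by $C \int_{\R} |f(x)| \log(e + |f(x)|)\, \mmd x$, completing the proof.

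The only genuine obstacle is establishing the refined weak-type inequality with the truncation $\{|f| > \lambda/2\}$ rather than the naive $\frac{C}{\lambda}\|f\|_1$; with the naive bound one gets a divergent $\int_1^\infty \lambda^{-1}\,\mmd\lambda$ and the argument collapses. The fix is routine: write $f = f\chi_{\{|f| \le \lambda/2\}} + f\chi_{\{|f| > \lambda/2\}} =: g + b$, note $Mg \le \lambda/2$ pointwise since $\|g\|_\infty \le \lambda/2$, hence $\{Mf > \lambda\} \subset \{Mb > \lambda/2\}$, and apply the ordinary weak $(1,1)$ bound to $b$. Everything else is bookkeeping with Tonelli and the elementary estimate $\log(2s) \lesssim \log(e+s)$ for $s \ge 1/2$, so I would present those steps tersely.
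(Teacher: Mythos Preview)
Your proof is correct and follows essentially the same approach as the paper: layer-cake representation, splitting at height $1$ to produce the $|S|$ term, the refined weak-type bound via the decomposition $f = f\chi_{\{|f|\le \lambda/2\}} + f\chi_{\{|f|>\lambda/2\}}$, and Tonelli to exchange the integrals. The only cosmetic difference is that you state the refined weak-type inequality upfront and justify it at the end, whereas the paper derives it inline.
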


\begin{proof} We use the layer-cake representation 
\[
\int_{S} Mf(x) \, \mmd x = \int_{0}^{\infty} |\{x \in S\colon Mf(x) > t\}| \, \mmd t
\]
and analyse the super level sets $A_t(S) := \{x \in S \colon Mf(x) > t\}:$ we split $|f| = |f| \cdot 1_{\{|f| > t/2\}} + |f| \cdot 1_{\{|f| \le t/2\}} =: f_1 + f_2.$ 
Clearly, $Mf_2(x) \le t/2,$ so we have that 
\[
A_t(S) \subset \{x \in S\colon Mf_1(x) > t/2\}.
\]
By the weak-type 1-1 inequality for the Hardy--Littlewood maximal operator, we get 
\[
|A_t(S)| \le C\frac{2}{t} \int_{\R} f_1(x) \, \mmd x = \frac{2C}{t} \int_{|f| > t/2} |f(x)| \, \mmd x. 
\]
An application of Fubini's theorem gives us then that
\begin{equation*}
 \begin{split}
\int_0^{\infty} |A_t(S)| \, \mmd t & \le |S| + \int_1^{\infty} |A_t(S)| \, \mmd t \\
& \le |S| + 4C \int_{\R} |f(x)| \log(e + |f(x)|) \, \mmd x.
\end{split} 
\end{equation*}
\end{proof}

\begin{lemma}\label{embedding}
Let $1\leq p\leq\infty $. There exists a constant $A>0$ such that for any $f\in C_{p,0}$ with $\|f\|_{C_{p,0}}\leq 2$ we have
$$\| |f|\log(e+|f|)\|_{1}\leq A.$$
\end{lemma}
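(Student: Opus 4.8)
The target is the continuous embedding $C_{p,0} \hookrightarrow L\log L$ restricted to the unit ball, i.e. a uniform bound on $\||f|\log(e+|f|)\|_1$ over $f \in C_{p,0}$ with $\|f\|_{C_{p,0}} \le 2$. The plan is to split the integral over the sublevel set $\{|f| \le 1\}$ and the superlevel set $\{|f| > 1\}$. On $\{|f| \le 1\}$ we have $\log(e+|f|) \le \log(e+1) \le 2$, so $\int_{|f|\le 1} |f|\log(e+|f|) \le 2\|f\|_1 \le 2$ and this part is harmless. The whole point is therefore the contribution of $\{|f| > 1\}$.

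\textbf{The main estimate.} On $E := \{|f| > 1\}$, note first that by the previous proposition (with $q$ arbitrary, or by the same argument) $\|f\|_\infty \le 2$, so $\log(e+|f|) \le \log(e+2) \le 2$ pointwise, and hence $\int_E |f|\log(e+|f|)\,\mmd x \le 2\|f\|_1 \le 2$ as well. In fact this already gives the claim with $A = 4$, using only $\|f\|_1 \le 1$ and the $L^\infty$ bound $\|f\|_\infty \le 2$ established in the preceding proposition. If one prefers to avoid invoking the $L^\infty$ bound as a black box, the alternative is to repeat its short proof: writing $E = \bigcup_i (a_i,b_i)$ as a countable union of disjoint intervals, $|E| \le \|f\|_1 \le 1$, and for $x \in (a_i,b_i)$ the fundamental theorem of calculus for the locally absolutely continuous representative (Proposition \ref{fact3}) together with H\"older gives $|f(x)| \le |f(a_i)| + \|f'\|_p |E|^{1/p'} \le 1 + 1 = 2$.

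\textbf{Conclusion.} Combining the two regions, $\||f|\log(e+|f|)\|_1 \le 2\|f\|_1 + 2\|f\|_1 \le 4$, so $A = 4$ works.

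\textbf{Main obstacle.} There is essentially no obstacle here: the lemma is a soft consequence of the $L^1 \cap L^\infty$ control that $C_{p,0}$ provides, and the only mild subtlety is making sure one is allowed to use the pointwise representative, which is exactly what Proposition \ref{fact3} guarantees, and that the $L^\infty$ bound is uniform over the unit ball, which is what the preceding proposition gives. The constant is absolute (independent of $p$), which is the form in which the lemma will be used later.
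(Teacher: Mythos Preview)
Your proof is correct and follows essentially the same approach as the paper's: split at the threshold $|f|=1$, control the small part by $\|f\|_1$, and on the superlevel set use local absolute continuity plus H\"older to obtain an $L^\infty$ bound. There is a harmless arithmetic slip: from $\|f\|_{C_{p,0}}\le 2$ you only get $\|f'\|_p\le 2$ (not $\le 1$), so the direct argument yields $|f(x)|\le 1+2=3$ on $E$, and invoking the previous proposition gives $\|f\|_\infty\le 2\|f\|_{C_{p,0}}\le 4$, not $\le 2$; however since $\log(e+4)<2$ your constant $A=4$ still works.
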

\begin{proof}
By Fact \ref{fact3} we can assume that $f\in AC_{\text{loc}}(\R)$. In particular, $f$ is continuous.\\
For any $M>0$, we can write
$$\int_{\R}|f|\log(e+|f|)=\int_{|f|\leq M}|f|\log(e+|f|)+\int_{|f|>M}|f|\log(e+|f|).$$

For the first integral, we just use that $\|f\|_{1}\leq \frac{1}{2}\|f\|_{C_{p,0}}\leq 1$, so 
$$\int_{|f|\leq M}|f|\log(e+|f|)\leq \log(e+M)\|f\|_{1}\leq \log(e+M).$$

To estimate the second integral notice that by the continuity of $f$, the set $\{|f|>M\}$ is open in $\R$ so it is a countable union of disjoint open intervals $\{|f|>M\}=\cup_{j}(a_j,b_j)$.
If we take $M\geq1$, then
$$|\{|f|>M\}| \leq \int \frac{|f|}{M}\leq \frac{\|f\|_{1}}{M}\leq 1.$$
For any $j$, and $x\in(a_j,b_j)$ since $f\in AC_{\text{loc}}[a_j,b_j]$, we have 
$$|f(x)-f(a_j)|\leq\int_{a_j}^{x}|f'(t)|\mmd t\leq \|f'\|_{p}\cdot (x-a_j)^{1/p'}\leq \|f'\|_{p},$$
because $x-a_j\leq b_j-a_j\leq 1$, then $(x-a_j)^{1/p'}\leq 1$. Consequently,
$$|f(x)|\leq |f(a_j)|+|f(x)-f(a_j)|\leq M+ \|f'\|_{p}\leq M+2.$$
Since we have $|f|\leq M+2$ for any $x\in \{|f|>M\}$ we conclude that 
$$\int_{|f|>M}|f|\log(e+|f|)\leq \log(e+M+2)\int |f|\leq \log(e+M+2)$$
and 
$$\||f|\log(e+|f|)\|_{1}\leq \log(e+M)+\log(e+M+2).$$

Taking $M=1$, we see that $A=\log(e+1)+\log(e+3)$ works.
\end{proof}

Finally, we state a technical Lemma that will be key for the proof in the case of phases with polynomial factors. 

\begin{lemma}\label{serie}If $k\geq2$, then
$$\sum_{l=1}^{\infty}l\left|{k\choose l}\right|<\infty.$$
\end{lemma}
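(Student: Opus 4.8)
The plan is to reduce everything to a comparison of successive ratios, which is exactly the content of Raabe's test. First I would dispose of the trivial case: if $k$ happens to be a positive integer, then $\binom{k}{l}=0$ for every $l>k$, the sum has only finitely many nonzero terms, and there is nothing to prove. So from now on I assume $k\ge 2$ is \emph{not} an integer, which guarantees $\binom{k}{l}\neq 0$ for every $l\ge 0$ and lets me freely divide.

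Next I would record the one-line recursion $\binom{k}{l+1}=\binom{k}{l}\cdot\frac{k-l}{l+1}$, and set $a_l:=l\,\bigl|\binom{k}{l}\bigr|$. For $l>k$ we have $|k-l|=l-k$, so
\[
\frac{a_{l+1}}{a_l}=\frac{l+1}{l}\cdot\frac{l-k}{l+1}=1-\frac{k}{l}.
\]
The heart of the matter is that this decays just fast enough. Comparing with $b_l:=\frac{1}{l(l-1)}$, whose ratio is $\frac{b_{l+1}}{b_l}=\frac{l-1}{l+1}=1-\frac{2}{l+1}$, a one-line computation shows that $\frac{a_{l+1}}{a_l}\le\frac{b_{l+1}}{b_l}$ is equivalent to $(k-2)l+k\ge 0$, which holds for all $l\ge 1$ precisely because $k\ge 2$. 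Hence the ratio $a_l/b_l$ is non-increasing for $l>k$, so $a_l\le C_k\,b_l=\frac{C_k}{l(l-1)}$ for all large $l$, with $C_k$ depending only on $k$. Summing and using $\sum_l\frac{1}{l(l-1)}<\infty$ (absorbing the finitely many small-$l$ terms into a constant) finishes the proof. Equivalently, one may simply invoke Raabe's test, since $l\bigl(\tfrac{a_l}{a_{l+1}}-1\bigr)=\tfrac{kl}{l-k}\to k>1$.

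The only genuine care needed — and the nearest thing to an obstacle — is the bookkeeping: one must restrict to $l>k$ before manipulating ratios (so that $|k-l|=l-k$ and no term vanishes), absorb the finitely many initial terms into the constant, and remember to handle an integer $k$ separately at the outset. None of this is deep, and no curvature or analytic input is used; the hypothesis $k\ge 2$ enters only through the elementary inequality $(k-2)l+k\ge 0$. For $1<k<2$ one could run the same scheme comparing instead with $b_l=l^{-1-\varepsilon}$ for a suitable $\varepsilon>0$, but since the lemma is only ever applied with $k\ge 2$ I would not pursue that refinement.
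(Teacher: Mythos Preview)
Your argument is correct. The recursion $\binom{k}{l+1}=\binom{k}{l}\tfrac{k-l}{l+1}$ gives exactly $\tfrac{a_{l+1}}{a_l}=1-\tfrac{k}{l}$ for $l>k$, and the inequality $(k-2)l+k\ge 0$ is the right condition for comparison with $b_l=\tfrac{1}{l(l-1)}$; equivalently, Raabe's test with limit $k>1$ settles it. The separate treatment of integer $k$ and the restriction to $l>k$ before taking ratios are handled cleanly.

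The paper proceeds differently: instead of a ratio test, it writes $\delta=k-\lfloor k\rfloor\in[0,1)$ and, for $l\ge\lfloor k\rfloor+2$, factors the product $k(k-1)\cdots(k-l+1)$ explicitly into a fixed positive block $C_k=k(k-1)\cdots(k-\lfloor k\rfloor)$ times $(1-\delta)(2-\delta)\cdots(m-\delta)$ with $m=l-\lfloor k\rfloor-1$. Dividing by $(l-1)!$ and using $\tfrac{j-\delta}{j}\le 1$ reduces the tail to $\sum_{m\ge 1}\tfrac{1}{(m+1)\cdots(m+\lfloor k\rfloor)}\le\sum_{m\ge 1}\tfrac{1}{(m+1)(m+2)}$, which is the same comparison series you reach. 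Your route is more economical and makes the role of the threshold $k\ge 2$ transparent through the single inequality $(k-2)l+k\ge 0$ (and, as you note, immediately suggests the extension to $k>1$ via Raabe). The paper's factorisation is slightly more explicit in that it produces a concrete bound for the tail without any limiting argument, but at the cost of more bookkeeping. Either approach is perfectly adequate here.
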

\begin{proof}
Notice that for any $l\geq \floor{k}+2$, the product $k(k-1)(k-2)...(k-l+1)$ starts with some positive factors $k,\,k-1, ...,k-\floor{k}$ and eventually its factors become negative, since $0\geq k-\floor{k}-1\geq k-\floor{k}-2,...\geq k-l-1$ .

Define $\delta=k-\floor{k}$, then $\delta\in [0,1)$ and exploring the previous observation:
\begin{equation*}
    \begin{split}
        \sum_{l\geq \floor{k}+2} l\left|{k\choose l}\right|=&
        \sum_{l\geq \floor{k}+2} \frac{1}{(l-1)!}
        |k(k-1)...(k-l+1)|\\
        =& \sum_{l\geq \floor{k}+2} \frac{1}{(l-1)!}|k(k-1)...\delta(\delta-1)(\delta -2)...(\delta+\floor{k}-l+1)|\\
        =& k(k-1)...(k-\floor{k})\sum_{l\geq \floor{k}+2} \frac{(1-\delta)(2-\delta)...(l-\floor{k}-1-\delta)}{(l-1)!}.
    \end{split}
\end{equation*}
Just calling $m=l-\floor{k}-1$ and $C_k=k(k-1)...(k-\floor{k})$ we get
\begin{equation*}
    \begin{split}
        \sum_{l\geq \floor{k}+2} l|{k\choose l}|=&
        =C_{k}\sum_{m\geq 1} \frac{(1-\delta)(2-\delta)...(m-\delta)}{(m+\floor{k})!}\\
        =&\sum_{m\geq 1}\frac{1-\delta}{1}\frac{2-\delta}{2}...\frac{m-\delta}{m}
        \frac{1}{(m+1)(m+2)...(m+\floor{k})}\\
        &\leq \sum_{m\geq 1} \frac{1}{(m+1)(m+2)...(m+\floor{k})}\leq \sum_{m\geq 1} \frac{1}{(m+1)(m+2)}<\infty.
    \end{split}
\end{equation*}
\end{proof}

\subsection{Proof of the main theorem} 

\begin{proof}[Proof of Theorem \ref{thm:main_positive}] First note that if $x\neq 0$ and $|t|<|x|$, 
$$\gamma(x,x-t)=\sum_{j=1}^{m}c_j(x)|x-t|^{d_j}=
\begin{cases}
\sum_{j=1}^{m}c_j(x)(x-t)^{d_j}, \text{ if }x>0. \\
\sum_{j=1}^{m}c_j(x)(t-x)^{d_j}, \text{ if }x<0.
\end{cases}$$
For example, if $x>0$, using Taylor expansions:
\begin{equation*}
    \begin{split}
     \gamma(x,x-t)=&\sum_{j=1}^{m}c_j(x)\sum_{l\geq 0} {d_j\choose l}(-t)^{l}x^{d_j-l}\\
     =&\sum_{j=1}^{m}c_j(x)x^{d_j}+\sum_{j=1}^{m}c_j(x) d_j(-t)x^{d_j-1}+\sum_{j=1}^{m}c_j(x)\sum_{l\geq 2}{d_j \choose l}(-t)^{l}x^{d_j-l}.
    \end{split}/
\end{equation*}
Then, for all $x\neq 0$, defining $$g(x;t):=f(t)\exp\left(i\sum_{j=1}^{m}c_j(x)\sum_{l\geq2}{d_j\choose l}(\pm t)^{l}|x|^{d_j-l}\right)$$
where we pick the sign $-$ if $x>0$ and $+$ if $x<0$, we have
\begin{equation}
    \begin{split}\label{conta}
        \frac{1}{|x|}|\int_{-|x|}^{|x|}&f(t)\exp(i\gamma(x,x-t))\mmd t|=\frac{1}{|x|}\left|\int_{-|x|}^{|x|}g(x;t)\exp\left(\pm i\sum_{j=1}^{m}c_j(x)d_j t|x|^{d_j-1}\right)\mmd t\right|\\
        \leq&\frac{1}{|x|}\frac{1}{|\sum_{j=1}^{m}c_j(x)d_j|x|^{d_j-1}|}
        \left\{ |f(x)|+|f(-x)|+\int_{-|x|}^{|x|}|\partial_t g(x;t)|\mmd t\right\}\\
        \leq& \frac{2}{d_m\|1/c_m\|_{\infty}^{-1}|x|^{d_m}}\left\{ |f(x)|+|f(-x)|+\int_{-|x|}^{|x|}|\partial_t g(x;t)|\mmd t\right\}
    \end{split}
\end{equation}
if $|x|\geq M$, for some suitable $M>1$, as $c_j \in L^{\infty}(\R), \, i=1,\dots,m-1.$

Now, let us see how to prove the result. Take $f\in C_{p,l}$ with $\|f\|_{C_{p,l}}=1$. Modifying $f$ in a set of measure zero, we can assume $f\in AC_{loc}(\R)$.

Pick $r$ measurable such that,
$$\frac{M_{\gamma}f(x)}{2}\leq \left|\dashint_{x-r(x)}^{x+r(x)}f(t)e^{i\gamma(x,x-t)}\mmd t\right|.$$
We can take for example $r(x):=\inf \left\{r>0\colon \left|\dashint_{-r}^{r} f(x-t)e^{i\gamma(x,t)} \mmd t\right| \ge \frac{M_{\gamma}f(x)}{2}\right\}$.

We will split the real line as the union of the following sets, for every $0<\varepsilon > 1$ 
\begin{itemize}
    \item $A_{1}:=[-M,M]$,
    \item $A_{2}:=\{x\notin [-M,M]\colon r(x)\leq |x|/2 \text{ and }M_{\gamma}f(x)\leq \frac{1}{|x|^{1+\eps}} \}$,
    \item $A_3:=\{x\notin [-M,M]\colon r(x)\leq |x|/2  \text{ and }M_{\gamma}f(x)>\frac{1}{|x|^{1+\eps}}\}$,
    \item $A_4:=\{x\notin [-M,M]\colon r(x)>|x|/2\}$.
\end{itemize}

\noindent\textit{Case 1: $A_1=[-M,M].$} For this interval, we use Lemmas \ref{llogl} and \ref{embedding}: We have 
\[
\int_{-1}^1 M_{\gamma}f(x) \, \mmd x \le \int_{-1}^1 Mf(x) \, \mmd x \le 2 + C \||f| \log(e+|f|)\|_1 \lesssim 1.
\]

\vspace{0,5cm}
\noindent\textit{Case 2:  $A_2 = \{x \in \R \setminus [-M,M]\colon r(x) \leq |x|/2 \text{ and } M_{\gamma}f(x) \leq 1/{|x|^{1+\eps}}\}$.} In this case, we bound directly 
\[
\int_{A_2} M_{\gamma}f(x) \, \mmd x \le 2 \int_1^{+\infty} \frac{\mmd x}{x^{1+\eps}} = \frac{2}{\eps}\lesssim_{\eps} 1.
\]

\vspace{0.5cm}
\noindent\textit{Case 3: $A_3 = \{x \in \R \setminus [-M,M]\colon r(x) \leq |x|/2 \text{ and } M_{\gamma}f(x) > {1}/{|x|^{1+\eps}}\}$.} In this case, we get immediately that

\[\frac{1}{2|x|^{1+\eps}}\leq \frac{M_{\gamma}(x)}{2}\leq 
\dashint_{x-r(x)}^{x+r(x)} |f(t)| \, \mmd t.
\]

\noindent As $r(x)\leq \frac{|x|}{2}$, it holds that for all $t\in (x-r(x),x+r(x)), \, |x|\leq 2|t|$. This implies that
\[
\frac{1}{2}\leq \dashint_{x-r(x)}^{x+r(x)} |f(t)| |x|^{1+\eps}\, \mmd t\leq 2^{1+\eps}\dashint_{x-r(x)}^{x+r(x)} |f(t)| |t|^{1+\eps}\, \mmd t\leq 4M(f(1+|t|^{1+\eps}))(x).
\]
Then $A_{3}\subset \{x\in \R\setminus[-1,1]\colon M((1+|t|^{1+\eps})f)(x)\geq \frac{1}{8}\}$. By the weak-type 1-1 bound for the Hardy--Littlewood 
maximal operator, we obtain again that 
\[
|A_3| \le 8C \cdot \|(1+|t|^{1+\eps}) f\|_1 \leq 8C,
\]
where we used that $\|f\|_{C_{p,1+\eps}}=1$. 
By Lemma \ref{llogl} again, we infer that 
\[
\int_{A_3} M_{\gamma}f(x) \leq \int_{A_3}Mf(x)\mmd x \, \mmd x \lesssim |A_3|+\|f \log(e+|f|)\|_{1}\lesssim 1. 
\]

\vspace{0.5cm}

\noindent\textit{Case 4: $A_4.$} For the last case, we need to split further into two other sets: 

\noindent\textit{Subcase 4.1: $A_{4,1} =  \{x \in A_4 \colon r(x) \ge 2|x|\}.$ } For this, we write the integral defining $M_{\gamma}$ as 
\begin{equation}\label{eq:threeintegrals}
 \int_{x-r(x)}^{-|x|} f(s) e^{i\gamma(x,x-s)} \, \mmd s + \int_{-|x|}^{|x|} f(s) e^{i\gamma(x,x-s)} \, \mmd s + \int_{|x|}^{x+r(x)}f(s) e^{i\gamma(x,x-s)} \, \mmd s.   
\end{equation}

The outer integrals are easier to handle. Indeed, there we bound the integrand pointwise by $|f(s)| \leq |f(s)|\cdot \left(\frac{|s|}{|x|}\right)^{\eps} $. Therefore, as $\|f|s|^{\eps}\|_{1}\leq \|(1+|s|^{1+\eps})f\|_{1}\leq 1$ 
both of the outermost integrals are bounded by $\frac{1}{|x|^{\eps}}$. This will be good enough since $\int_{|x|\geq M}\frac{1}{|x|^{1+\eps}}\mmd x<\infty$.

\vspace{5mm}
The inner integral has to be estimated a bit more carefully. We have that, for all $|x|>M$\\
\begin{equation}\label{eq:upper-bound}
    \begin{split}
        \frac{1}{|x|}|\int_{-|x|}^{|x|}&f(t)\exp(i\gamma(x,x-t))\mmd t|\\
        \lesssim & \frac{1}{|x|^{d_m}}\left\{ |f(x)|+|f(-x)|+\int_{-|x|}^{|x|}|\partial_t g(x;t)|\mmd t\right\}.\\
    \end{split}
\end{equation}

Recalling that $g(x;t):=f(t)\exp\left(i\sum_{j=1}^{m}c_j(x)\sum_{l\geq2}{d_j\choose l}(\pm t)^{l}|x|^{d_j-l}\right)$, then the integral which we need to estimate it 
$$\frac{1}{|x|^{d_m}}\int_{-|x|}^{|x|}|\partial_t g(x;t)|\mmd t,$$
which is bounded by 
\begin{equation}\label{eq:ugly-estimate}
    \begin{split}
          \frac{1}{|x|^{d_m}}\int_{-|x|}^{|x|} & |f'(t)|\mmd t+\frac{1}{|x|^{d_m}}\int_{-|x|}^{|x|}|f(t)|
        \sum_{j=1}^{m}|c_j(x)|\sum_{l\geq 2}l\left|{d_j\choose l}\right||t|^{l-1}|x|^{d_j-l}\mmd t\\
        \leq \frac{1}{|x|^{d_m}}\|f'\|_{p}(2|x|)^{1/p'} & +\frac{1}{|x|^{d_m}}\sum_{j=1}^{m}|c_j(x)|\sum_{l\geq 2}l\left|{d_j\choose l}\right||x|^{d_j-l} \int_{-|x|}^{|x|}|f(t)||t|^{\eps}|t|^{l-1-\eps}\mmd t\\
        \lesssim \frac{1}{|x|^{d_m-1/p'}}+ &
        \frac{1}{|x|^{d_m}}\sum_{j=1}^{m}|c_j(x)|\sum_{l\geq 2}l\left|{d_j\choose l}\right||x|^{d_j-1-\eps} \int_{-|x|}^{|x|}|f(t)||t|^{\eps}\mmd t\\
        \leq \frac{1}{|x|^{d_m-1/p'}}+ & 
        \frac{1}{|x|^{1+\eps}}\sum_{j=1}^{m}|c_j(x)|\sum_{l\geq 2}l\left|{d_j\choose l}\right|.
    \end{split}
\end{equation}
For all $1\leq j\leq m$, we have $d_j\geq 2 $, then using Lemma \ref{serie}, $\sum_{l\geq 2}l\left|{d_j\choose l}\right|<\infty$. From this, and from the fact that all the $c_j$ are bounded, it follows that
$$\frac{1}{|x|^{d_m}}\int_{-|x|}^{|x|}|g'(t)|\mmd t \lesssim
\frac{1}{|x|^{d_m-1/p'}}+\frac{1}{|x|^{1+\eps}}$$
\noindent and this is enough to conclude the analysis of this subcase, as the above expression is integrable for $|x| \ge M$ in the case where $d_m > 1/p' + 1,$ and putting together \eqref{eq:threeintegrals} and \eqref{eq:upper-bound} yields 
\[
\int_{A_{4,1}} M_{\gamma}f(x) \, \mmd x \lesssim_{p,s,\gamma} 1.
\]

\noindent\textit{Subcase 4.2: $A_{4,2} = A_4 \backslash A_{4,1}.$} In this set, the strategy we employ is still essentially the same as in Subcase 4.1. The difference in this case is that, instead of bounding $M_{\gamma}$ pointwise by three 
integrals as in \eqref{eq:threeintegrals}, we bound it by the sum of two: 

\begin{equation}\label{eq:upper-bound-2}
    \begin{split}
      M_{\gamma}f(x)\lesssim 
    & \frac{1}{|x|}\left\{\left|\int_{x-r(x)}^{x}f(s)e^{i\gamma(x,x-s)}\mmd s\right|+\int_{x}^{x+r(x)}|f(s)|\cdot\left(\frac{|s|}{|x|}\right)^{\eps}\mmd s\right\}\\
        \leq&\frac{1}{|x|}\left\{\left|\int_{x-r(x)}^{x}f(s)e^{i\gamma(x,x-s)}\mmd s\right|+\frac{1}{|x|^{\eps}}\right\}.
    \end{split}
\end{equation}

The integral in the upper bound in \ref{eq:upper-bound-2} is treated in the same way as in Subcase 4.1. Indeed, we bound 
\begin{equation*}
\begin{split}
      &\left|\int_{x-r(x)}^{x} f(s) e^{i\gamma(x,x-s)} \, \mmd s \right|=\left|\int_{x-r(x)}^{x} g(x;s) \left(\pm i\sum_{j=1}^{m}c_j(x)d_j s|x|^{d_j-1}\right) \, \mmd s \right| \\
 &\lesssim \frac{1}{|x|^{d_m}}\left\{|f(x)|+|f(x-r(x))|+\int_{x-r(x)}^{x}|\partial_s g(x;s)|\mmd s\right\},
\end{split}
\end{equation*}
for $|x| \ge M.$ Notice that since we are assuming that $f$ is locally absolutely continuous,
\begin{align*}
|f(x-r(x))| & \leq |f(x)|+|f(x-r(x))-f(x)|\leq |f(x)|+\int_{x-r(x)}^{x} |f'(s)|\mmd s \cr 
 & \le |f(x)| + (r(x))^{1/p'} \|f'\|_p \lesssim |f(x)| + |x|^{1/p'} \|f'\|_p. 
\end{align*}
The analysis of $\int_{x-r(x)}^{x}|\partial_s g(x;s)|\mmd s$ follows essentially the same ideas as in \eqref{eq:ugly-estimate}, and we therefore skip it. This implies, as in the previous subcase, that 
\[
\int_{A_{4,2}} M_{\gamma}f(x) \, \mmd x \lesssim_{p,s,\gamma} 1. 
\]
Putting the analysis of the four cases together, we conclude the proof of Theorem \ref{thm:main_positive}.
\end{proof}

\section{Further comments and remarks} 

\subsection{Different Weights and Theorem \ref{thm:main_positive}} 

For all the boundedness results that we saw in the previous section, we looked at subspaces of $L^{1}(\R)$ in that we had the existence of weak derivative belonging to some $L^{p}(\R)$ and some weighted integrability for the function, that is, $f$ times a certain weight  $\varphi_{\eps}(x)=1+|x|^{1+\eps}$ in $L^{1}$. One might ask whether we can consider different weights and ask for less integrability of the function
and still obtain the same result.

More precisely, for $\varphi_{\eps}(x)=1+|x|^{1+\eps}$, we defined $$ C_{p,1+\eps}=\{f\in L^{1}(\varphi_{\eps}(x)\mmd x)\colon \exists\text{ weak derivative }f'\in L^{p}(\R)\},$$ with the norm
$$\|f\|_{C_{p,1+\eps}}=\|f\cdot \varphi_{\eps}\|_{1}+\|f'\|_{p}.$$
Since $\varphi_{\eps} \geq 1$, $\|f\|_{1}\leq \|f\cdot \varphi_{\eps}\|_{1}$ and $C_{p,1+\eps}$ is a subspace of $L^{1}(\R)$.

More generally, for any $\varphi\geq 1$ we can consider the subspace of $L^{1}(\R)$ given by: $$ C_{p}^{\varphi}=\{f\in L^{1}(\varphi(x) \mmd x)\colon f \text{ is weakly differentiable and its weak derivative }f'\in L^{p}(\R)\}$$
with the norm
$\|f\|_{C_{p}^{\varphi}}=\|\varphi(\cdot)f\|_{1}+\|f'\|_{p}$.
\vspace{0.2cm}
By Theorem \ref{thm:main_positive} we have that
$$\|M_{\gamma}f\|_{1}\lesssim_{\gamma,\eps, p} \| f\cdot \varphi_{\eps} \|_{1}+\|f'\|_{p}=\|f\|_{C_{p}^{\varphi_{\eps}}},$$
whenever $\gamma$ is a polynomial expression as before. It is natural, therefore, to ask the following question
\begin{question}
Can we improve the weight $\varphi_{\eps}$?
\end{question}

The implicit constant that we get in the proof of the Theorem \ref{thm:main_positive} for the inequality above blows up if one sends $\eps \rightarrow 0$, so it seems that a more careful analysis is needed. The proof of Theorem \ref{thm:main_positive} can, 
however, be adapted to get improvements in the weight, in the sense of replacing $\varphi_{\eps}$ with a smaller function. Minor changes in the argument give us the following result.\\

\begin{theorem}

Let $\varphi:\R\rightarrow \R_{+}$ radial, $\varphi(t) \gtrsim 1+|t|$, $\varphi$ eventually non-decreasing,
$\varphi(2x)\lesssim \varphi(x)$ and 
$\int_{|x|>R}\frac{1}{\varphi(x)}\mmd x<\infty  $, some $R>0$. Then 
$$\|M_{\gamma}f\|_{1}\lesssim_{p,\gamma,\varphi} \|f\|_{C_{p}^{\varphi}}\text{, for all }k\in\Z_{\geq 2},\, 1\leq p<\infty.$$
In particular, the weights $\psi_{m}(x)=1+|x|(\log|x|)^{m}\chi_{|x|\geq 1}(x)$ are admissible for this Theorem whenever $m > 1.$ 
\end{theorem}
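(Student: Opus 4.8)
The plan is to rerun the proof of Theorem \ref{thm:main_positive} essentially verbatim, tracking precisely where the concrete weight $\varphi_\eps(x)=1+|x|^{1+\eps}$ was used and checking that the four hypotheses on $\varphi$ suffice in each place. First I would fix $f\in C_p^\varphi$ with $\|f\|_{C_p^\varphi}=1$, replace it by its locally absolutely continuous representative via Proposition \ref{fact3}, choose the same radius function $r$, and split $\R$ into the four sets $A_1=[-M,M]$, $A_2=\{x\notin A_1\colon r(x)\le|x|/2,\ M_\gamma f(x)\le 1/\varphi(x)\}$, $A_3=\{x\notin A_1\colon r(x)\le|x|/2,\ M_\gamma f(x)>1/\varphi(x)\}$, and $A_4=\{x\notin A_1\colon r(x)>|x|/2\}$. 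The contribution of $A_1$ is handled exactly as before by Lemmas \ref{llogl} and \ref{embedding}, since on a bounded set only the local structure of $f$ matters and $\varphi\gtrsim 1$. On $A_2$ we bound $\int_{A_2}M_\gamma f\le \int_{|x|>M}1/\varphi(x)\,\mmd x<\infty$, which is exactly the integrability hypothesis on $1/\varphi$.

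For $A_3$ the key point is that $r(x)\le|x|/2$ forces $|x|\le 2|t|$ for $t\in(x-r(x),x+r(x))$, so from $1/(2\varphi(x))\le \dashint|f|$ and the doubling estimate $\varphi(x)\lesssim\varphi(2t)\lesssim\varphi(t)$ we get $1\lesssim \dashint |f(t)|\varphi(t)\,\mmd t\le M(\varphi f)(x)$; hence $A_3$ sits inside a superlevel set of $M(\varphi f)$, whose measure is $\lesssim\|\varphi f\|_1\le 1$ by the weak $(1,1)$ bound, and then Lemma \ref{llogl} finishes as before. Here the doubling condition $\varphi(2x)\lesssim\varphi(x)$ replaces the trivial inequality $|x|^{1+\eps}\le 2^{1+\eps}|t|^{1+\eps}$. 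For $A_4$ we split into $A_{4,1}=\{r(x)\ge 2|x|\}$ and $A_{4,2}$ exactly as in the original proof: the inner integral over $(-|x|,|x|)$ is estimated by integration by parts in $t$ against the leading oscillatory factor $\pm i\sum_j c_j(x)d_j t|x|^{d_j-1}$, producing the gain $|x|^{-d_m}$ times boundary terms $|f(\pm x)|$ plus $\int|\partial_t g(x;t)|\,\mmd t$; the outer integrals are bounded pointwise by $|f(s)|\le |f(s)|\varphi(s)/\varphi(x)$ using $\varphi$ eventually non-decreasing (so $\varphi(x)\le\varphi(s)$ for $|s|\ge|x|\ge M$) and doubling, giving a term $\lesssim 1/\varphi(x)$, integrable over $|x|>M$. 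In the integration-by-parts term, the Taylor-expansion estimate \eqref{eq:ugly-estimate} goes through unchanged: the crucial bound $\int_{-|x|}^{|x|}|f(t)||t|^{l-1}\,\mmd t\lesssim |x|^{l-1}\|f\|_1+\dots$ only used $|t|\le|x|$ and $|t|^{l-1}|x|^{d_j-l}\le |x|^{d_j-1}$, and the term with $f'$ contributes $\|f'\|_p(2|x|)^{1/p'}/|x|^{d_m}$ which is integrable since $d_m-1/p'>1$ for $d_m\ge 2$ and $p<\infty$. For $A_{4,2}$ one again bounds $|f(x-r(x))|\le|f(x)|+r(x)^{1/p'}\|f'\|_p\lesssim|f(x)|+|x|^{1/p'}\|f'\|_p$ and proceeds identically.

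The main obstacle — really the only genuinely new point — is making sure the weight hypotheses are used in a mutually consistent way: the decay demand $\int_{|x|>R}1/\varphi<\infty$ competes with the growth $\varphi\gtrsim 1+|x|$, so $\varphi$ must be very close to linear, and this is exactly why the borderline weights $\psi_m(x)=1+|x|(\log|x|)^m\chi_{|x|\ge1}$ are admissible precisely when $m>1$ (so that $\int_{|x|>R}dx/(|x|(\log|x|)^m)<\infty$) while $m\le 1$ fails. The other places where $\eps>0$ was used — the outer-integral bound and the $A_3$-argument — both needed only the \emph{comparison} $\varphi(x)\lesssim\varphi(t)$ for $|t|\gtrsim|x|$, which follows from $\varphi$ being eventually monotone and doubling, so no strict power decay is actually required. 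I would conclude by verifying that $\psi_m$ is radial, $\gtrsim 1+|t|$, eventually non-decreasing, doubling (since $\log(2|x|)\le 2\log|x|$ for $|x|$ large), and satisfies the integrability condition iff $m>1$, then assemble the four cases to obtain $\|M_\gamma f\|_1\lesssim_{p,\gamma,\varphi}\|f\|_{C_p^\varphi}$.
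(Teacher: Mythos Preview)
Your approach mirrors the paper's own sketch (which also defers details to the reader), and Cases 1--3 are handled correctly with the indicated hypotheses. However, there is a real gap in your treatment of the inner integral in Case 4. You claim that \eqref{eq:ugly-estimate} ``goes through unchanged'' using only $|t|\le|x|$ and the crude bound $|t|^{l-1}|x|^{d_j-l}\le|x|^{d_j-1}$. Tracing this through, the second term of \eqref{eq:ugly-estimate} becomes at best
\[
\frac{1}{|x|^{d_m}}\sum_{j=1}^{m}|c_j(x)|\,|x|^{d_j-1}\sum_{l\ge 2}l\left|{d_j\choose l}\right|\,\|f\|_1 \;\lesssim\; \frac{1}{|x|},
\]
which is \emph{not} integrable over $|x|>M$; your ``$+\dots$'' hides exactly the term that fails.

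The missing ingredient is precisely the hypothesis $\varphi(t)\gtrsim 1+|t|$, which you list but never invoke in Case 4 --- and this is the only place in the whole argument where it is needed. One must retain a weight on $f$: split $|t|^{l-1}=|t|^{\alpha}\cdot|t|^{l-1-\alpha}$ for some fixed $0<\alpha\le 1$, bound $|t|^{l-1-\alpha}\le|x|^{l-1-\alpha}$, and then use
\[
\int_{-|x|}^{|x|}|f(t)|\,|t|^{\alpha}\,\mmd t\;\le\;\int_{\R}|f(t)|(1+|t|)\,\mmd t\;\lesssim\;\|f\varphi\|_1
\]
to obtain the integrable bound $C/|x|^{1+\alpha}$. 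The paper's sketch flags exactly this step (``we use that $\varphi(t)\gtrsim 1+|t|$ in Case 4, as then we might replace any weighted integral $\int|f(t)||t|^{\alpha}\,\mmd t$ by $\|f\varphi\|_1$''); once this is inserted, the remainder of your outline is correct and coincides with the paper's argument.
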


In fact, Case 1 in the proof of Theorem \ref{thm:main_positive} remain exactly the same. The set $A_2$ is replaced by the set where $M_{\gamma}(x) \leq 1/\varphi(x),$ and similarly for $A_3.$ In analogy, the integrability at infinity is used for Case 2, 
and the doubling condition on $\varphi$ for Case 3. Finally, we use that $\varphi(t) \gtrsim 1 + |t|$ in Case 4, as then we might replace any weighted integral 
$\int_{\R} |f(t)| \cdot |t|^{\alpha} \, \mmd t$ by $\|f \cdot \varphi\|_1.$ We leave the details for the interested reader. 

\subsection{Regularity of $M_{\gamma}$ and related endpoint questions} In comparison to the results by Tanaka \cite{Tanaka2002}, Aldaz and P\'erez-L\'azaro \cite{AldazPerezLazaro2006} and Kurka \cite{Kurka2015}, where it is proved that 
\[
\| (\mathcal{M}f)'\|_1 \le C \|f'\|_1,
\]
for $f \in BV(\R),$ where $\mathcal{M}$ stands for either the centered on uncentered Hardy--Littlewood maximal functions. As we have proven the exitence of non-trivial spaces that are mapped into $L^1$ by our oscillatory maximal functions $M_{\gamma},$ it 
is interesting to ask whether the same holds for the Sobolev space $W^{1,1}.$ 

\begin{question}\label{qu:sobolev} Let $\gamma: \R \times \R \to \R$ be as in the proof of Theorem \ref{thm:main_positive}. Does it hold that 
\[
M_{\gamma} : W^{1,1} \to W^{1,1} \text{ boundedly?}
\]
\end{question}

As partial reason in order to expect a positive answer to this question, we notice that, as $M_{\gamma}$ is a \emph{sublinear} operator, then, by the Haj\l{}asz-Onninen maximal principle \cite[Theorem~1]{HajlaszOnninen2004}, it holds that 
\[
M_{\gamma} : W^{1,p}(\R) \to W^{1,p}(\R) \text{ boundedly,}
\]
for $p>1.$ The proof of Theorem 1 in \cite{HajlaszOnninen2004} also proves that $M_{\gamma}$ maps $C_{p,1+\eps}$ into $C_{p,0},$ for any $\eps > 0$ and $p > 1.$ 

If true, we speculate that a positive answer to Question \ref{qu:sobolev} should incorporate the main ideas of Kurka's proof with additional decompositions of the real line, in order to 
get rid of the weights $\varphi_{\eps}(x) = 1+|x|^{1+\eps}$ in the statement of Theorem \ref{thm:main_positive}. 

\subsection{The $L \log L$ class and Theorem \ref{thm:main_positive}} If one looks carefully into the proof of Theorem \ref{thm:main_positive}, it is noticeable that there are only a couple of spots where any regularity on $f$ was demanded, additionally 
to that 
\[
f \log (e +|f|) \in L^1.
\]
Indeed, Case 3 in the proof of Theorem \ref{thm:main_positive} is the first time we need to employ the weighted condition $f \in L^1((1+|x|^{1+\eps})\,\mmd x),$ and the only occasion where we need to use that the derivative of $f$ belongs to a certain 
$L^p$ space is in Case 4 of the same proof, in order to translate oscillatory behaviour into decay. It is natural, therefore, to conjecture that these conditions are, in fact, superfluous. 

\begin{question} Is the operator $M_{\gamma}$ bounded from $L \log L$ to $L^1$, whenever $\gamma$ satisfies nontrivial curvature conditions, such as in Theorem \ref{thm:main_positive}? 
\end{question}

Although natural, this question might be hard to answer, as it would confirm that the infinity of the maximal functions $M_{\gamma}$ near infinity improves drastically with respect to the usual Hardy--Littlewood maximal function, which is only bounded from $L \log L$ to $L^1_{loc}.$ 
On the other hand, we believe that, with some additional effort, it should be possible to remove the regularity assumption on the function $f.$ In particular, we have not exploited completely the oscillatory nature of the phase, and a more careful decomposition and analysis, such as in
\cite{Lie20191,Lie20192,ZorinKranich2018}, should allow us to substitute the integration by parts in Case $4$ by some additional decay assumption on $f$.

\bibliography{bibliography}
\bibliographystyle{abbrv}

\end{document}